\newtheorem{remark}{Remark}
\newtheorem{theorem}{Theorem}
\newtheorem*{theorem*}{Theorem}
\newtheorem{corollary}{Corollary}
\newtheorem{lemma}{Lemma}
\newtheorem{definition}{Definition}
\newtheorem*{proposition*}{Proposition}
\newcommand{\be}{\begin{eqnarray*}}
\newcommand{\ee}{\end{eqnarray*}}
\newcommand{\ba}{\begin{align*}}
\newcommand{\bpm}{\begin{pmatrix}}
\newcommand{\epm}{\end{pmatrix}}
\begin{document}
\title{Power-bounded quaternionic operators}


\author{Qinghai Huo\thanks{E-mail:  {\tt hqh86@mail.ustc.edu.cn}. } \ and Zhenghua Xu\thanks{E-mail: {\tt zhxu@hfut.edu.cn}.
 This work  is partially  supported by the  National Natural Science Foundation of China (No.  12301097) and the  Anhui Provincial Natural Science Foundation (No. 2308085MA04).}   \\  \emph{\small School of Mathematics, Hefei University of Technology,}\\ \emph{\small Hefei, 230601, P.R. China} \\~\\}
\date{}

\maketitle

\begin{abstract}
 Recently, the conception  of slice regular functions  was allowed to  introduce   a new quaternionic functional calculus, among which
the theory of semigroups of linear operators  was  developed  into the quaternionic setting, even in a more general   case of real alternative $*$-algebras. In this paper,   we  initiate  to study the discrete case  and introduce the notion of  power-bounded quaternionic operators.  In particular, by the spherical Yosida approximation,  we establish a discrete Hille-Yosida-Phillips theorem  to give an equivalent characterization of quaternionic linear operators  being power-bounded. A sufficient condition of the power-boundedness for quaternionic linear operators is also given. In addition, a  non-commutative version of  the Katznelson-Tzafriri theorem (J. Funct. Anal. 68: 313-328, 1986) for  power-bounded quaternionic operators  is formulated  in terms of the $S$-spectrum.
\end{abstract}
{\bf Keywords:}\quad $S$-spectrum; $S$-resolvent operator; functions of a hypercomplex variable; power-bounded quaternionic operators \\
{\bf MSC (2020):}\quad Primary: 47A10;  Secondary: 30G35, 47A25, 47A60
\section{Introduction}
Given a complex Banach space $X$,  denote by $\mathcal{B}(X)$  the space of bounded linear operators on $X$.  A (complex) linear operators semigroup on $X$  is a mapping $T :[0, \infty) \rightarrow  \mathcal{B}(X)$ such that $T(0)$ is the identity operator and
\begin{equation}\label{semigroup-property}
T(t+s) = T(t)T(s), \quad \forall\ t,s >0.\end{equation}
The  semigroup $T(t)$ is called strongly continuous if
$$\lim_{t \rightarrow 0^{+}}  T(t)x=x, \quad \forall  \ x\in X.$$

For more  basic theories and applications of operator semigroups, we refer to monographs e.g. \cite{Davies,Engel-Nagel,Pazy}. The  following  generation theorem is a fundamental result in the theory of one-parameter semigroups of linear operators  (cf. \cite[p. 19]{Pazy}).

 \begin{theorem}[Hille-Yosida-Phillips]\label{Hille-Yosida-Phillips}
  A  complex  linear operator $A$ is the infinitesimal  generator of a strongly continuous semigroup $T(t)$, satisfying $\| T(t)\|\leq C$ for all $t> 0$,  if and only if

(i)  $A$ is closed  and densely defined,

(ii) the  resolvent set  $\varrho(A)$ of $A$ contains $\mathbb{R}^{+}=(0,+\infty)$ and
$$\|R_{\lambda}(A)^{n}\|\leq \frac{C}{\lambda^{n}}, \quad \lambda\in \mathbb{R}^{+}, n\in \mathbb{N},$$
where $C$ is constant,  $R_{\lambda}(A)=(\lambda \mathcal{I}-A)^{-1}$ is the  resolvent of $A$, and $\mathbb{N}$ denotes the set of all positive integers.
 \end{theorem}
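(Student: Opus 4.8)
The plan is to prove the two implications by the classical arguments, based on the Laplace transform of the semigroup and the Yosida approximation. For the necessity, suppose $A$ generates a strongly continuous semigroup $T(t)$ with $\|T(t)\|\le C$. That $A$ is closed and densely defined is standard for generators of $C_0$-semigroups, so I would only recall it. For the resolvent estimate, fix $\lambda>0$ and set $R_\lambda x:=\int_0^\infty e^{-\lambda t}T(t)x\,dt$; the bound $\|T(t)\|\le C$ makes this Bochner integral converge with $\|R_\lambda\|\le C/\lambda$, and a short computation using \eqref{semigroup-property} shows $R_\lambda=(\lambda\mathcal{I}-A)^{-1}$, so $\mathbb{R}^{+}\subset\varrho(A)$. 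Differentiating under the integral sign $n-1$ times in $\lambda$ (justified by dominated convergence) gives $R_\lambda^{\,n}=\frac{1}{(n-1)!}\int_0^\infty t^{n-1}e^{-\lambda t}T(t)\,dt$, whence $\|R_\lambda^{\,n}\|\le\frac{C}{(n-1)!}\int_0^\infty t^{n-1}e^{-\lambda t}\,dt=C/\lambda^{\,n}$.

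For the sufficiency, assume (i) and (ii) and introduce the Yosida approximants $A_\lambda:=\lambda AR_\lambda(A)=\lambda^2 R_\lambda(A)-\lambda\mathcal{I}\in\mathcal{B}(X)$. The two facts I would establish first are: (a) $\lambda R_\lambda(A)x\to x$ as $\lambda\to\infty$ for all $x\in X$ — proved on $D(A)$ via $\lambda R_\lambda(A)x-x=R_\lambda(A)Ax$ and the case $n=1$ of (ii), then extended to $X$ by density and the uniform bound — so that $A_\lambda x\to Ax$ for $x\in D(A)$; and (b) $\|e^{tA_\lambda}\|\le C$ for all $t>0$, obtained by expanding $e^{tA_\lambda}=e^{-\lambda t}\sum_{n\ge 0}\frac{(\lambda^2 t)^n}{n!}R_\lambda(A)^n$ and applying (ii) termwise, giving $\|e^{tA_\lambda}\|\le Ce^{-\lambda t}e^{\lambda t}=C$. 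Since $A_\lambda$ and $A_\mu$ commute, the identity $e^{tA_\lambda}x-e^{tA_\mu}x=\int_0^t e^{sA_\lambda}e^{(t-s)A_\mu}(A_\lambda-A_\mu)x\,ds$ together with (b) yields $\|e^{tA_\lambda}x-e^{tA_\mu}x\|\le C^2 t\|A_\lambda x-A_\mu x\|$ for $x\in D(A)$; by (a) this is Cauchy uniformly for $t$ in compact sets, so $e^{tA_\lambda}x$ converges to some $T(t)x$, and density of $D(A)$ with the bound (b) extend the convergence to all $x\in X$. The limit $T(t)$ then inherits $\|T(t)\|\le C$, the semigroup law, and strong continuity from the approximants.

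It remains to identify the generator $B$ of $T(t)$ with $A$. For $x\in D(A)$, passing to the limit in $e^{tA_\lambda}x-x=\int_0^t e^{sA_\lambda}A_\lambda x\,ds$, where the integrand converges uniformly on $[0,t]$ by (a) and (b), gives $T(t)x-x=\int_0^t T(s)Ax\,ds$, so $x\in D(B)$ and $Bx=Ax$; hence $B$ extends $A$. For equality, the necessity part already proved applies to $B$, so $\mathbb{R}^{+}\subset\varrho(B)$; combined with $\mathbb{R}^{+}\subset\varrho(A)$ from (ii), both $\lambda\mathcal{I}-A$ and $\lambda\mathcal{I}-B$ are bijections onto $X$, and $B\supseteq A$ forces $D(B)=D(A)$. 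I expect this last identification to be the main obstacle: the convergence estimates for the approximants are routine bookkeeping, but one must ensure that the generator of the limit semigroup is exactly $A$ rather than a proper extension, which is precisely where the resolvent hypothesis in (ii) enters a second time.
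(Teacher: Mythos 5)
Your proposal is correct and is precisely the classical Laplace-transform/Yosida-approximation argument for the Hille--Yosida--Phillips theorem; the paper itself does not prove this statement but cites it from Pazy's monograph, where exactly this proof is given (including the use of the full family of estimates $\|R_\lambda(A)^n\|\le C/\lambda^n$ for the uniform bound on $e^{tA_\lambda}$, and the resolvent hypothesis entering again to show the limit semigroup's generator is not a proper extension of $A$). No gaps to report.
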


By introducing the so-called  \textit{$S$-resolvent operator} and \textit{$S$-spectrum} instead of classical resolvent operator and spectrum respectively, the generation theorem above relating generators and semigroups was recently  generalized  into the quaternionic case \cite{Colombo-Sabadini-11} by Colombo and Sabadini and into the general case of real alternative $*$-algebras \cite{Ghiloni} by Ghiloni and Recupero.

Due to the  non-commutation of quaternions, there exists naturally left spectrum and right spectrum  for quaternionic  right linear operators on a two-sided  quaternionic Banach space. However, this strategy is not going very well, especially, when considering  quaternionic  analogues of holomorphic functional calculus.  The discovery of $S$-spectrum and $S$-resolvent operator   in 2006 is a breakthrough, which benefits from the development of the theory of slice regular functions \cite{Gentili-Struppa-07},   and  shed light on possible formulations  of quaternionic hyper-holomorphic functional calculus. Nowadays, the notions of $S$-spectrum and $S$-resolvent operator are proved to be powerful and got fruitful  investigations; see e.g. \cite{Alpay-Colombo-Sabadini,Colombo-Gantner-19,Colombo-Gantner-Kimsey-18,Colombo-Gantner-Kimsey-Sabadini-22,Colombo-Sabadini-Struppa-11,Ghiloni-Moretti-Perotti-13,
Wang-Qin-Agarwal} and references therein.

For $T\in \mathcal{B}(X)$, the family $\{T^{n}\}_{n\in \mathbb{N}}$ can be viewed as  a discrete semigroup satisfying the condition $T^{m+n}=T^{m}T^{n}$ for all $m,n\in \mathbb{N}$, which is the discrete version of semigroup property in (\ref{semigroup-property}).  In this paper, we consider  the discrete version  of one-parameter semigroups of quaternionic  linear operators  by introducing the notion of \textit{power-bounded quaternionic operators}.

The remaining part of the paper is organized as follows. The next section is devoted to some preliminary notions on right quaternionic linear operators and  quaternionic $S$-functional calculus. In Section 3, we introduce the concept of power-bounded quaternionic operators and discuss  their properties. In particular, we give a discrete analog of Theorem \ref{Hille-Yosida-Phillips} for power-bounded quaternionic operators, which generalizes a result of  Nevanlinna into the quaternionic setting (Theorem \ref{Nevanlinna-H}) by the notion of the \textit{spherical Yosida approximation}.  In addition, a sufficient condition of   the power-boundedness for  quaternionic operators is given in  Theorem \ref{sufficient-condition}. The main tool of our results is proved to be relying  on   the $S$-spectrum and $S$-resolvent operator. Finally, in Section 4,   the $S$-spectrum  can be used   to build a quaternionic analogue of the  celebrated Katznelson-Tzafriri theorem for power-bounded quaternionic operators.

\section{Preliminaries}
In this section, we recall some preliminaries   regarding   quaternionic $S$-functional calculus and slice regular functions (see \cite{Colombo-Sabadini-Struppa-11,Gentili-Stoppato-Struppa-13,Colombo-Gantner-19,Colombo-Gantner-Kimsey-18}).

Let $\mathbb H$ be the non-commutative (but associative) real algebra of quaternions with standard basis $\{1,\,i,\,j, \,k\}$ obeying the multiplication relations
$$i^2=j^2=k^2=ijk=-1.$$
 Every quaternion $q=x_0+x_1i+x_2j+x_3k $ $(x_0,x_1,x_2,x_3\in \mathbb{R})$ is spit  into the \textit{real} part ${\rm{Re}}\, (q)=x_0$ and the \textit{imaginary} part ${\rm{Im}}\, (q)=x_1i+x_2j+x_3k$. The \textit{conjugate} of $q\in \mathbb H$ is defined by $\bar{q}={\rm{Re}}\, (q)-{\rm{Im}}\, (q)$ and its \textit{modulus} is defined by $|q|=\sqrt{q\overline{q}}$.

The unit $2$-sphere of purely imaginary quaternions is given by
$$\mathbb S=\big\{q \in \mathbb H \ |\ q^2 =-1\big\}.$$
Denote by $\mathbb C_I$ the complex plane $\mathbb R \oplus I\mathbb R $, isomorphic to $ \mathbb C =\mathbb C_i$.

Let $X$ be a two-sided vector space over $\mathbb{H}$. A map $T:X \rightarrow X$ is said to be a right $\mathbb{H}$-linear operator
if
$$T (uq+ v) = (Tu)q +Tv,  \quad u, v\in X,  q\in \mathbb{H}.$$
The set of right $\mathbb{H}$-linear operators on $X$ is a two-sided vector space over $\mathbb{H}$  with respect to
the operations
$$(sT) (v) := s (Tv),  \quad (Ts) (v) :=  T (sv),  \quad  v\in X,  s\in \mathbb{H}.$$

 Unless specifically stated, we assume that from now until the end of the paper:

\textit{Given a two-sided quaternionic  Banach space $X$ over $\mathbb{H}$,   let $\mathcal{B}(X)$ be  the algebra of bounded right  $\mathbb{H}$-linear operators on $X$. The unit of $\mathcal{B}(X)$ is the identity operator  denoted by $\mathcal{I}_{X}$ or simply by $\mathcal{I}$ if no confusion may arise.}

\begin{definition}Let  $T\in \mathcal{B}(X)$. For $s\in \mathbb{H}$, set
$$Q_{s}(T):= T^{2}-2{\rm{Re}}\,(s)T+|s|^{2}\mathcal{I}.$$
  The  $S$-resolvent set of $T$ is defined as
$$ \rho_{S}(T)=\{s\in \mathbb{H}: Q_{s}(T) \ {\mbox {is   invertible in}}\ \mathcal{B}(X)\}.$$
and the  $S$-spectrum of $T$ is defined as
$$\sigma_{S}(T)= \mathbb{H}\setminus  \rho_{S}(T).$$
For $s\in  \rho_{S}(T)$, $Q_{s}(T)^{-1}\in \mathcal{B}(X)$  is called the pseudo-resolvent operator of $T$ at $s$.
\end{definition}

 \begin{definition}
 Let  $T\in \mathcal{B}(X)$. For $s\in \rho_{S}(T)$,  the left and right $S$-resolvent operator are defined as, respectively,
 $$S_{L}^{-1}(s,T)=Q_{s}(T)^{-1}(\overline{s}\mathcal{I}-T),$$
 and $$S_{R}^{-1}(s,T)=(\overline{s}\mathcal{I}-T)Q_{s}(T)^{-1}.$$
 \end{definition}

In a sense, the $S$-resolvent operator can be viewed as a Taylor series; see e.g.  \cite[Theorem 4.7.4]{Colombo-Sabadini-Struppa-11}.
\begin{theorem}\label{Spectrum-Taylor-theorem}
Let  $T\in \mathcal{B}(X)$ and $s\in \mathbb{H}$ with $\|T\|<|s|$. Then $\sum_{n=0}^{\infty}  T^n s^{-n-1}$  converge in the operator norm and
 \begin{equation}\label{Spectrum-Taylor}
 S_{L}^{-1}(s,T) =\sum_{n=0}^{\infty}  T^n s^{-n-1}. \end{equation}
\end{theorem}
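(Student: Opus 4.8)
The plan is to prove the statement in two steps: first show that the series defines a bounded operator $R\in\mathcal B(X)$, and then identify $R$ with $S_L^{-1}(s,T)$ by checking that it satisfies the defining relation $Q_s(T)R=\bar s\mathcal I-T$. Since $\|T\|<|s|$ also forces $s\in\rho_S(T)$ (the quaternionic analogue of the spectral bound $\sigma_S(T)\subseteq\overline{B(0,\|T\|)}$; the last paragraph indicates how to recover this from the same computation), multiplying that relation on the left by $Q_s(T)^{-1}$ yields $R=Q_s(T)^{-1}(\bar s\mathcal I-T)=S_L^{-1}(s,T)$, which is the assertion.

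For the convergence I would observe that $T^{n}s^{-n-1}$ is the operator $v\mapsto T^{n}(s^{-n-1}v)$, so $\|T^{n}s^{-n-1}\|\le\|T\|^{n}\,|s|^{-n-1}=|s|^{-1}\bigl(\|T\|/|s|\bigr)^{n}$. Since $\|T\|/|s|<1$, the series $\sum_{n\ge0}T^{n}s^{-n-1}$ is dominated by a convergent geometric series, hence converges absolutely in the Banach algebra $\mathcal B(X)$ to some $R$ with $\|R\|\le(|s|-\|T\|)^{-1}$.

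The core of the argument is the telescoping identity $Rs=\mathcal I+TR$, obtained by reindexing: $Rs=\sum_{n\ge0}T^{n}s^{-n}=\mathcal I+\sum_{n\ge1}T^{n}s^{-n}=\mathcal I+T\sum_{m\ge0}T^{m}s^{-m-1}=\mathcal I+TR$. Thus $TR=Rs-\mathcal I$, and applying $T$ once more (using $T(Rs)=(TR)s$) gives $T^{2}R=(TR)s-T=(Rs-\mathcal I)s-T=Rs^{2}-s\mathcal I-T$. Since $2\mathrm{Re}(s)$ and $|s|^{2}$ are real, it follows that
\[
Q_s(T)R=T^{2}R-2\mathrm{Re}(s)\,TR+|s|^{2}R=R\bigl(s^{2}-2\mathrm{Re}(s)s+|s|^{2}\bigr)+\bigl(2\mathrm{Re}(s)-s\bigr)\mathcal I-T.
\]
Now $s^{2}-2\mathrm{Re}(s)s+|s|^{2}=0$ for every quaternion $s$, and $2\mathrm{Re}(s)-s=\bar s$; hence $Q_s(T)R=\bar s\mathcal I-T$, which is exactly the relation defining $S_L^{-1}(s,T)$.

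The only real difficulty is noncommutative bookkeeping: because $T$ is merely right $\mathbb H$-linear it does \emph{not} commute with multiplication by the quaternion $s$, so $s$ cannot be treated as a central scalar; the identity $Rs=\mathcal I+TR$ is precisely what repackages the ordering of $T$ and $s$ into a form where the scalar relation $s^{2}-2\mathrm{Re}(s)s+|s|^{2}=0$ does the rest. To keep the proof self-contained (not citing $\sigma_S(T)\subseteq\overline{B(0,\|T\|)}$), one can recover $s\in\rho_S(T)$ by the same ideas: the companion series $R':=\sum_{n\ge0}s^{-n-1}T^{n}$ converges by the identical estimate and, via the mirror-image telescoping $sR'=\mathcal I+R'T$, satisfies $R'Q_s(T)=\bar s\mathcal I-T$; since $\bar s\mathcal I-T=\bar s\bigl(\mathcal I-\bar s^{-1}T\bigr)$ with $\|\bar s^{-1}T\|\le\|T\|/|s|<1$ is invertible by a Neumann series, $R(\bar s\mathcal I-T)^{-1}$ is a right inverse and $(\bar s\mathcal I-T)^{-1}R'$ a left inverse of $Q_s(T)$, so $Q_s(T)$ is invertible and $s\in\rho_S(T)$.
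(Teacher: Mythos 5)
Your proof is correct: reading $T^{n}s^{-n-1}$ as $v\mapsto T^{n}(s^{-n-1}v)$ matches the paper's convention $(Ts)(v)=T(sv)$, the telescoping identity $Rs=\mathcal I+TR$ together with its mirror version does give $Q_{s}(T)R=\bar s\mathcal I-T=R'Q_{s}(T)$, and your left/right-inverse argument legitimately establishes $s\in\rho_{S}(T)$ before you multiply by $Q_{s}(T)^{-1}$. The paper itself gives no proof of this theorem -- it quotes it from Colombo--Sabadini--Struppa (Theorem 4.7.4) -- and your argument is essentially the standard one from that reference, so the approaches coincide.
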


Recall the $S$-spectral radius  for quaternionic linear  operators.
 \begin{definition}Let  $T\in \mathcal{B}(X)$.  The $S$-spectral radius of $T$ is defined as
 $$r_{S}(T)=\sup\{|s|: s\in \sigma_{S}(T)\}.$$
\end{definition}

 In terms of the $S$-spectral radius,   the Gelfand formula could be generalized into the setting of quaternionic linear  operators as in the classical case. See \cite[Theorem 4.12.6]{Colombo-Sabadini-Struppa-11}.
\begin{theorem}[$S$-spectral radius theorem]\label{spectral-radius}
Let $X$ be  a two-sided quaternionic Banach space, $T\in \mathcal{B}(X)$ and $r_{S}(T)$ be its $S$-spectral radius. Then
$$r_{S}(T)=\lim_{n\rightarrow\infty} \|T^{n}\|^{1/n}.$$
\end{theorem}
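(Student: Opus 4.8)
This is the quaternionic counterpart of the classical Gelfand spectral radius formula, and I would prove it by transcribing the usual two‑inequality argument, with the $S$‑resolvent operator and Theorem~\ref{Spectrum-Taylor-theorem} taking over the role played classically by the resolvent and its Laurent expansion at infinity. A preliminary observation is that the limit on the right already exists: since $\|T^{m+n}\|\le\|T^m\|\,\|T^n\|$ in the Banach algebra $\mathcal{B}(X)$, the sequence $\log\|T^n\|$ is subadditive, so by Fekete's lemma $\ell:=\lim_{n\to\infty}\|T^n\|^{1/n}$ exists and equals $\inf_{n\ge1}\|T^n\|^{1/n}$. It then suffices to establish $r_S(T)\le\ell$ and $r_S(T)\ge\ell$.

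For the inequality $r_S(T)\le\ell$ I would show that every $s\in\mathbb{H}$ with $|s|>\ell$ lies in $\rho_S(T)$, by writing down $Q_s(T)^{-1}$ explicitly as an operator power series in $T$ with real coefficients. Put $s_0=\Real(s)$ and expand the scalar rational function
\[
\frac{1}{z^2-2s_0z+|s|^2}=\sum_{n\ge0}d_n z^n ;
\]
its two poles $s_0\pm\sqrt{s_0^2-|s|^2}$ both have modulus $|s|$, so the radius of convergence is exactly $|s|$ and $\limsup_n|d_n|^{1/n}=|s|^{-1}$. Since $\|T^n\|^{1/n}\to\ell<|s|$, the root test gives absolute convergence of $R:=\sum_{n\ge0}d_n T^n$ in operator norm; as these are all polynomials in $T$ they commute, and the formal identity $(z^2-2s_0z+|s|^2)\sum_n d_n z^n=1$ passes to $Q_s(T)R=R\,Q_s(T)=\mathcal{I}$. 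Hence $Q_s(T)$ is invertible, $s\in\rho_S(T)$, so $\sigma_S(T)\subseteq\{\,|s|\le\ell\,\}$ and $r_S(T)\le\ell$. (If $s\in\mathbb{R}$ the rational function is $1/(z-s)^2$, still with radius of convergence $|s|$, so nothing changes.)

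For $r_S(T)\ge\ell$ I would argue as in the scalar case that the Taylor series $\sum_{n\ge0}T^n s^{-n-1}$ of Theorem~\ref{Spectrum-Taylor-theorem}, though a priori only known to converge for $|s|>\|T\|$, actually converges throughout the exterior region $\{\,|s|>r_S(T)\,\}$. The mechanism is the slice‑regular function theory behind the $S$‑functional calculus: $s\mapsto S_L^{-1}(s,T)$ is a slice regular $\mathcal{B}(X)$‑valued function on the open, axially symmetric set $\rho_S(T)$; the series $\sum_n T^n s^{-n-1}$ is its expansion at infinity, and by the identity principle together with the theorem on domains of convergence of such series it converges on the largest ``ball‑complement'' contained in $\rho_S(T)$, i.e. on $\{\,|s|>r_S(T)\,\}$, where it coincides with $S_L^{-1}(s,T)$. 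Once the series converges for a given $|s|>r_S(T)$, its general term is bounded, $\|T^n\|\,|s|^{-n-1}\le M$, so $\|T^n\|^{1/n}\le M^{1/n}|s|^{1+1/n}\to|s|$ and hence $\ell\le|s|$; letting $|s|$ decrease to $r_S(T)$ gives $\ell\le r_S(T)$.

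Putting the two inequalities together yields $r_S(T)=\ell=\lim_{n\to\infty}\|T^n\|^{1/n}$. The first inequality is routine (an operator Neumann‑type series controlled by the root test). The real obstacle is the second: one must carry into the operator‑valued setting the analytic input from slice regular function theory — slice regularity of $S_L^{-1}(s,T)$ on $\rho_S(T)$, the identity principle, and the fact that the ``Laurent‑at‑infinity'' series of such a function converges on the entire exterior region determined by $r_S(T)$ — which is what replaces the holomorphy of the classical resolvent. A minor point to be checked along the way is that $\sigma_S(T)$ is compact and axially symmetric, so that $r_S(T)$ is attained and the set $\{\,|s|>r_S(T)\,\}$ is indeed contained in $\rho_S(T)$.
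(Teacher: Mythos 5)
The paper itself offers no proof of this theorem: it is quoted from \cite[Theorem 4.12.6]{Colombo-Sabadini-Struppa-11}, so there is no in-paper argument to measure you against, and I judge your proposal on its own. Its skeleton is a faithful quaternionic transcription of Gelfand's formula and is essentially sound. The first inequality is complete as you state it: $Q_s(T)=T^2-2\Real(s)T+|s|^2\mathcal{I}$ is a polynomial in $T$ with \emph{real} coefficients, the scalar function $1/(z^2-2\Real(s)z+|s|^2)$ has its poles on the circle of radius $|s|$, so its real Taylor coefficients satisfy $\limsup_n|d_n|^{1/n}=|s|^{-1}$, and the root test with $\ell=\lim_n\|T^n\|^{1/n}<|s|$ makes $R=\sum_n d_nT^n$ an honest two-sided inverse of $Q_s(T)$; hence $r_S(T)\le\ell$. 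The second inequality is where your argument is not self-contained: you invoke, without proof, that $S_L^{-1}(\cdot,T)$ is slice regular on $\rho_S(T)$ and that an operator-valued slice regular function on an axially symmetric exterior region admits a series expansion at infinity converging on all of $\{|s|>r_S(T)\}$, identified with $\sum_n T^ns^{-n-1}$ by uniqueness. These facts are true and available in the literature (note the one-sidedness: the expansion $\sum_n T^ns^{-n-1}$ carries its coefficients on the left, i.e. $S_L^{-1}(\cdot,T)$ is \emph{right} slice regular, and the vector-valued splitting lemma is what lets you run the slicewise Laurent argument), but they are machinery of essentially the same depth as the statement being proved, so as written this half is an appeal to external theory rather than a proof. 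A leaner route, using only what the paper already records, is Lemma \ref{operator-formula}: take $\Omega$ to be the open ball of radius $r_S(T)+\epsilon$, bound $\|S_L^{-1}(s,T)\|$ by its (finite) supremum $M_\epsilon$ on the compact circle $\partial\Omega_I$, and read off $\|T^n\|\le M_\epsilon\,(r_S(T)+\epsilon)^{n+1}$, whence $\ell\le r_S(T)+\epsilon$ for every $\epsilon>0$. Your remaining bookkeeping (Fekete's lemma for existence of the limit, boundedness of the terms of a convergent series, letting $|s|$ decrease to $r_S(T)$) is correct.
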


 \begin{lemma}\label{operator-formula}
 Let  $T\in \mathcal{B}(X)$ and $\Omega$ be a bounded slice Cauchy domain with $\sigma_{S} (T)\subset \Omega$. For every $I\in \mathbb{S}$, we have
$$T^{n}=\frac{1}{2\pi}\int_{\partial \Omega_{I} } S^{-1}_{L}(s,T)ds_{I}s^{n}, $$
and
$$T^{n}=\frac{1}{2\pi}\int_{\partial \Omega_{I} }s^{n}ds_{I} S^{-1}_{R}(s,T), $$
where $ds_{I}:=-Ids$ and $\Omega_{I}:=\Omega \cap \mathbb{C}_{I}$.
\end{lemma}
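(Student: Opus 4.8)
The plan is to reduce both identities to an elementary contour integral over a circle of large radius, exploiting the path-independence built into the $S$-functional calculus together with the Taylor expansion of the $S$-resolvent operator from Theorem~\ref{Spectrum-Taylor-theorem}. First I would recall that the two integrals on the right-hand side are unchanged if $\Omega$ is replaced by any other bounded slice Cauchy domain containing $\sigma_S(T)$ and $I$ by any other element of $\mathbb{S}$; this Cauchy-type invariance is part of the foundations of the $S$-functional calculus \cite{Colombo-Sabadini-Struppa-11}. Since $\sigma_S(T)\subset\{q\in\mathbb{H}:|q|\le\|T\|\}$ (by Theorem~\ref{spectral-radius}, or directly from the convergence in Theorem~\ref{Spectrum-Taylor-theorem}), I may therefore take $\Omega$ to be the open ball of radius $R$ centered at the origin for any fixed $R>\|T\|$, so that $\partial\Omega_I$ is the circle $\{s\in\mathbb{C}_I:|s|=R\}$.

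On this circle one has $\|T\|<|s|$, so Theorem~\ref{Spectrum-Taylor-theorem} gives $S_L^{-1}(s,T)=\sum_{m=0}^{\infty}T^m s^{-m-1}$ with convergence uniform in $s$ over $\partial\Omega_I$. Inserting this series into the first integral, and using that $s^{-m-1}$, $ds_I$ and $s^n$ all lie in the commutative slice $\mathbb{C}_I$ when $s\in\mathbb{C}_I$, the uniform convergence lets me interchange summation and integration to get
\[
\frac{1}{2\pi}\int_{\partial\Omega_I}S_L^{-1}(s,T)\,ds_I\,s^n
=\sum_{m=0}^{\infty}T^m\cdot\frac{1}{2\pi}\int_{\partial\Omega_I}s^{\,n-m-1}\,ds_I .
\]
Parametrizing $s=Re^{I\theta}$ with $\theta\in[0,2\pi]$ gives $ds_I=-I\,ds=Re^{I\theta}\,d\theta$, so the inner integral equals $\frac{R^{\,n-m}}{2\pi}\int_0^{2\pi}e^{I(n-m)\theta}\,d\theta$, which is $1$ when $m=n$ and $0$ otherwise. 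Hence the series collapses to the single term $T^n$, proving the first formula.

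The second formula follows in exactly the same way from the right-sided expansion $S_R^{-1}(s,T)=\sum_{m=0}^{\infty}s^{-m-1}T^m$, which holds for $\|T\|<|s|$ by the same argument as Theorem~\ref{Spectrum-Taylor-theorem}; here the same elementary identity $\frac{1}{2\pi}\int_{\partial\Omega_I}s^{\,n-m-1}\,ds_I=\delta_{m,n}$ again isolates the term $T^n$. I expect the only genuinely delicate point to be the justification that the large circle is an admissible contour, that is, the path-independence of the integrals over $\partial\Omega_I$; once that is granted, the interchange of summation and integration is routine thanks to the uniform convergence from Theorem~\ref{Spectrum-Taylor-theorem}, and what remains is just the classical computation $\frac{1}{2\pi}\int_{\partial\Omega_I}s^{\,k}\,ds_I=\delta_{k,-1}$ transplanted to the slice $\mathbb{C}_I$.
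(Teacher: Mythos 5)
Your proposal is correct: the paper itself states this lemma without proof, quoting it as a known preliminary from the $S$-functional calculus literature (it is the compatibility of the calculus with the powers $s^n$, cf. \cite{Colombo-Sabadini-Struppa-11}), and your argument — deform to a ball of radius $R>\|T\|$ using the contour- and slice-independence of the Cauchy integrals, expand $S_L^{-1}(s,T)$ (resp. $S_R^{-1}(s,T)$) in the uniformly convergent series of Theorem \ref{Spectrum-Taylor-theorem}, and integrate term by term using $\frac{1}{2\pi}\int_{\partial\Omega_I}s^{k}\,ds_I=\delta_{k,-1}$ within the commutative slice $\mathbb{C}_I$ — is exactly the standard proof underlying that reference. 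The only ingredient you lean on, the independence of the integral from $\Omega$ and $I$, is indeed established in \cite{Colombo-Sabadini-Struppa-11} and applies here since $s^n$ is entire and intrinsic while $S_L^{-1}(\cdot,T)$ is regular on $\rho_S(T)$, so no gap remains.
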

Here $\Omega$ is a slice Cauchy domain means that, for every $I\in \mathbb{S}$, the boundary $\partial \Omega_{I}$ of $\Omega_{I}$ is a finite union  of nonintersecting piecewise continuously differentiable Jordan curves in $\mathbb{C}_{I}$.

Now we    recall the definition of slice regularity.
\begin{definition} \label{de: regular}
Let $\Omega$ be a domain in $\mathbb H$. A function $f :\Omega \rightarrow \mathbb H$ is called left \emph{slice} \emph{regular} if, for all $ I \in \mathbb S$, its restriction $f_I$ to $\Omega_{I}$ is \emph{holomorphic}, i.e., it has continuous partial derivatives and satisfies
$$\bar{\partial}_I f(x+yI):=\frac{1}{2}\left(\frac{\partial}{\partial x}+I\frac{\partial}{\partial y}\right)f_I (x+yI)=0,\quad x+yI\in \Omega_I.$$
Similarly, the definition of right slice regular functions can be given.

Denote by $\mathcal{SR}_{L}(\Omega)$ and $\mathcal{SR}_{R}(\Omega)$ the set of left and right slice regular functions in $\Omega$, respectively.
 \end{definition}

  Based on Lemma \ref{operator-formula}  and  Definition \ref{de: regular},   quaternionic  $S$-functional calculus can be  formulated as follows.
  \begin{definition}
  Let  $T\in \mathcal{B}(X)$ and $\Omega$ be a bounded slice Cauchy domain with $\sigma_{S} (T)\subset \Omega$. Define quaternionic  $S$-functional calculus as
$$f(T) =\frac{1}{2\pi}\int_{\partial \Omega_{I} } S^{-1}_{L}(s,T)ds_{I}f(s), \quad f\in\mathcal{SR}_{L}(\Omega),$$
and
$$f(T)=\frac{1}{2\pi}\int_{\partial \Omega_{I} }s^{n}ds_{I} S^{-1}_{R}(s,T), \quad f\in\mathcal{SR}_{R}(\Omega). $$
\end{definition}

Two specific domains are introduced in the theory of slice regular functions.
\begin{definition} \label{de: domain}
Let $\Omega$ be a domain in $\mathbb H $.

1. $\Omega$ is called a \textit{slice domain}  if it intersects the real axis and if  $\Omega_I$  is a domain in $ \mathbb C_I $ for all $I \in \mathbb S$.

2. $\Omega$ is called an \textit{axially symmetric domain} if    $x + y\mathbb S\subseteq \Omega $ for all possible $x + yI \in \Omega$ with  $x,y \in \mathbb R $ and $I\in \mathbb S$.
\end{definition}

Let us recall more properties of $S$-spectrum for  quaternionic  linear operators, which shall be used in the sequel.
The first is the structure and compactness of  $S$-spectrum; see \cite[Theorems 4.8.6 and 4.8.11]{Colombo-Sabadini-Struppa-11}.

\begin{theorem}\label{Compactness}
Let   $T\in \mathcal{B}(X)$. Then the  $S$-spectrum of $\sigma_{S}(T)$ is  a nonempty, compact, and  axially symmetric set.
\end{theorem}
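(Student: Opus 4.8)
The plan is to verify in turn the three asserted features of $\sigma_S(T)$ — axial symmetry, compactness, and non-emptiness — the last of which is the only genuine obstacle.

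\textbf{Axial symmetry.} This is immediate from the definition: $Q_s(T)=T^2-2\,\mathrm{Re}(s)\,T+|s|^2\mathcal I$ depends on $s$ only through the real numbers $\mathrm{Re}(s)$ and $|s|$, so $Q_{x+yI}(T)=Q_{x+yJ}(T)$ for all $x,y\in\mathbb R$ and $I,J\in\mathbb S$. Hence $x+yI\in\rho_S(T)$ exactly when $x+yJ\in\rho_S(T)$, so $\rho_S(T)$, and therefore its complement $\sigma_S(T)$, is a union of spheres $x+y\mathbb S$, i.e.\ axially symmetric.

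\textbf{Compactness.} To see that $\sigma_S(T)$ is closed I would check that $\rho_S(T)$ is open: the map $s\mapsto Q_s(T)$ is continuous (indeed real-analytic) from $\mathbb H$ into $\mathcal B(X)$, the invertible elements of the Banach algebra $\mathcal B(X)$ form an open set, and $\rho_S(T)$ is precisely the preimage of that set. To see that $\sigma_S(T)$ is bounded I would show that $|s|>\|T\|$ implies $s\in\rho_S(T)$: since $Q_s(\cdot)$ is a real quadratic polynomial in $T$ whose scalar counterpart $t^2-2\,\mathrm{Re}(s)\,t+|s|^2$ has a reciprocal holomorphic on the disc $\{|t|<|s|\}$ (both roots have modulus $|s|$), the Taylor series $\sum_{n\ge0}d_nt^n$ of that reciprocal at $t=0$ has real coefficients with $\limsup_n|d_n|^{1/n}=|s|^{-1}$, so $C:=\sum_{n\ge0}d_nT^n$ converges in $\mathcal B(X)$ and satisfies $Q_s(T)C=CQ_s(T)=\mathcal I$ because the identity $(t^2-2\,\mathrm{Re}(s)\,t+|s|^2)\sum_{n\ge0}d_nt^n=1$ holds for power series in the single operator $T$. (One may instead simply quote Theorem \ref{spectral-radius}, which yields $r_S(T)=\lim_n\|T^n\|^{1/n}\le\|T\|$.) Being closed and bounded in $\mathbb H\cong\mathbb R^4$, $\sigma_S(T)$ is compact.

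\textbf{Non-emptiness.} This is the crux, and I would argue by contradiction in the spirit of the classical proof. Assume $\sigma_S(T)=\emptyset$; then $Q_s(T)$ is invertible for every $s\in\mathbb H$, so $S_L^{-1}(s,T)=Q_s(T)^{-1}(\overline{s}\mathcal I-T)$ is defined on all of $\mathbb H$. It is a standard feature of the $S$-functional calculus that $s\mapsto S_L^{-1}(s,T)$ is slice regular wherever it is defined; thus, under our assumption, it is an entire $\mathcal B(X)$-valued slice regular function whose restriction to each slice $\mathbb C_I$ is holomorphic on all of $\mathbb C_I$. By Theorem \ref{Spectrum-Taylor-theorem}, $S_L^{-1}(s,T)=\sum_{n\ge0}T^ns^{-n-1}$ whenever $|s|>\|T\|$, whence $\|S_L^{-1}(s,T)\|\le(|s|-\|T\|)^{-1}\to0$ as $|s|\to\infty$; in particular each slice restriction is bounded. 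The vector-valued Liouville theorem then forces each slice restriction to be constant, and the decay at infinity makes that constant zero; since this holds for every $I\in\mathbb S$, we obtain $S_L^{-1}(s,T)\equiv0$. But then $\overline{s}\mathcal I-T=Q_s(T)\,S_L^{-1}(s,T)\equiv0$ on $\mathbb H$, which is absurd (already $s=0$ forces $T=0$ while $s=1$ forces $T=\mathcal I$, assuming $X\neq\{0\}$). Hence $\sigma_S(T)\neq\emptyset$. The only steps needing care are the slice regularity of $s\mapsto S_L^{-1}(s,T)$ — obtained by differentiating $Q_s(T)^{-1}(\overline{s}\mathcal I-T)$, as recorded in the references on $S$-functional calculus — and the appeal to Liouville on each slice, which is just the classical vector-valued statement.
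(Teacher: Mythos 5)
Your proof is correct. Note that the paper itself gives no argument for this statement: it is quoted from \cite[Theorems 4.8.6 and 4.8.11]{Colombo-Sabadini-Struppa-11}, and what you wrote is essentially the standard proof found there — axial symmetry because $Q_s(T)$ depends on $s$ only through $\mathrm{Re}(s)$ and $|s|$; closedness from continuity of $s\mapsto Q_s(T)$ plus openness of the invertible group of the (real) Banach algebra $\mathcal{B}(X)$; boundedness from the convergent real power series for $\bigl(t^2-2\,\mathrm{Re}(s)t+|s|^2\bigr)^{-1}$ evaluated at $T$ when $\|T\|<|s|$; and non-emptiness by a Liouville-type contradiction applied slicewise to the entire, decaying $S$-resolvent. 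Two minor remarks. First, the point you flag is indeed the one needing care: $s\mapsto S_L^{-1}(s,T)$ is \emph{right} slice regular in $s$, so on each slice $\mathbb{C}_I$ the restriction is holomorphic for the complex structure given by multiplication by $I$ on the right; with that convention the vector-valued Liouville theorem applies exactly as you use it (and one could shortcut the final contradiction by noting that for real $s>\|T\|$ one has $S_L^{-1}(s,T)=(s\mathcal{I}-T)^{-1}\neq 0$). Second, your parenthetical alternative of invoking Theorem \ref{spectral-radius} to bound $\sigma_S(T)$ is better omitted: the Gelfand-type formula is normally proved using compactness and non-emptiness of the $S$-spectrum, so leaning on it here risks circularity, whereas your series argument is self-contained.
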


 Given $T\in \mathcal{B}(X)$, denote by $\mathcal{N}_{\sigma_{S}(T)}$  the set of all  functions $f\in \mathcal{SR}(\Omega)$ with $\sigma_{S} (T)\subset \Omega$ and preserving all slices, i.e. $f(\Omega_I)\subset \mathbb C_I$. With this notation, the $S$-spectral mapping theorem can be stated as follows \cite[Theorem 4.13.2]{Colombo-Sabadini-Struppa-11}.
\begin{theorem}\label{spectral-mapping}
Let  $T\in \mathcal{B}(X)$ and $f\in \mathcal{N}_{\sigma_{S}(T)}$. Then
$$ \sigma_{S}(f(T))= f(\sigma_{S}(T))=\{f(s):s\in \sigma_{S}(T) \}.$$
\end{theorem}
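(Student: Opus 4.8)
The statement to be proved is the $S$-spectral mapping theorem, and the plan is to establish the two inclusions $f(\sigma_{S}(T))\subseteq\sigma_{S}(f(T))$ and $\sigma_{S}(f(T))\subseteq f(\sigma_{S}(T))$ separately, using as the main engine the fact that the restriction of the $S$-functional calculus to $\mathcal{N}_{\sigma_{S}(T)}$ is a unital algebra homomorphism; in particular $(fg)(T)=f(T)g(T)$ for $f,g\in\mathcal{N}_{\sigma_{S}(T)}$, and $(p\circ f)(T)=p(f(T))$ for every real polynomial $p$. The key observation is that for $w\in\mathbb{H}$ the map $s\mapsto Q_{w}(s)=s^{2}-2\,{\rm Re}(w)\,s+|w|^{2}$ is exactly such a real polynomial, so that $Q_{w}(f)(T)=Q_{w}(f(T))$, and both $Q_{w}\circ f$ and $Q_{s_{0}}$ are intrinsic slice regular functions whenever $f\in\mathcal{N}_{\sigma_{S}(T)}$.

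For the inclusion $f(\sigma_{S}(T))\subseteq\sigma_{S}(f(T))$, I would fix $s_{0}\in\sigma_{S}(T)$ and set $w:=f(s_{0})$. Since $f$ preserves slices, it maps the sphere $[s_{0}]$ onto $[w]$, so the intrinsic slice regular function $h:=Q_{w}\circ f$ vanishes identically on $[s_{0}]$. By the structure of the zero sets of slice regular functions, $h$ is then divisible by $Q_{s_{0}}$, i.e. $h=Q_{s_{0}}\cdot g$ for some $g\in\mathcal{N}_{\sigma_{S}(T)}$ defined on a suitable axially symmetric neighborhood of $\sigma_{S}(T)$ (and intrinsic, since $h$ and $Q_{s_{0}}$ are). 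Applying the $S$-functional calculus and the homomorphism property gives $Q_{w}(f(T))=Q_{s_{0}}(T)\,g(T)=g(T)\,Q_{s_{0}}(T)$. If $Q_{w}(f(T))$ were invertible, then $Q_{s_{0}}(T)$ would be both left and right invertible, hence invertible, contradicting $s_{0}\in\sigma_{S}(T)$; therefore $w\in\sigma_{S}(f(T))$.

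For the reverse inclusion, suppose $w\notin f(\sigma_{S}(T))$. Since $\sigma_{S}(T)$ is axially symmetric (Theorem \ref{Compactness}) and $f$ preserves slices, $f(\sigma_{S}(T))$ is axially symmetric, so $[w]\cap f(\sigma_{S}(T))=\emptyset$; equivalently $Q_{w}(f(s))\neq 0$ for all $s\in\sigma_{S}(T)$, and by continuity together with compactness of $\sigma_{S}(T)$ the intrinsic function $h:=Q_{w}\circ f$ is zero-free on some bounded slice Cauchy domain $\Omega'$ with $\sigma_{S}(T)\subset\Omega'$. Then the slice reciprocal $g:=h^{-\bullet}$, which for the intrinsic, zero-free $h$ coincides with the pointwise reciprocal $1/h$, again belongs to $\mathcal{N}_{\sigma_{S}(T)}$ and satisfies $hg=gh=1$. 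The homomorphism property now yields $Q_{w}(f(T))\,g(T)=g(T)\,Q_{w}(f(T))=\mathcal{I}$, so $Q_{w}(f(T))$ is invertible and $w\in\rho_{S}(f(T))$. Combining the two inclusions with the definition $f(\sigma_{S}(T))=\{f(s):s\in\sigma_{S}(T)\}$ completes the proof.

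I expect the real difficulty to lie not in the operator-theoretic bookkeeping but in two function-theoretic points. The first is the divisibility step: a slice regular function vanishing on a $2$-sphere $[s_{0}]$ must factor as $Q_{s_{0}}$ times an intrinsic slice regular function, with the quotient extending slice-regularly across $[s_{0}]$ and over a full neighborhood of $\sigma_{S}(T)$ inside the domain of $f$; the case of a real $s_{0}$, where $Q_{s_{0}}(s)=(s-s_{0})^{2}$ has a double zero, needs the separate remark that $w=f(s_{0})$ is then real and $h=(f-f(s_{0}))^{2}$ already vanishes to order two. The second is the compatibility of the $S$-functional calculus with products and with composition by the quadratic $Q_{w}$, i.e. that $f\mapsto f(T)$ is a unital (commutative) algebra homomorphism on $\mathcal{N}_{\sigma_{S}(T)}$. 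Granting these, it remains only to place all the slice regular functions that appear on one common bounded slice Cauchy domain surrounding $\sigma_{S}(T)$, which is routine by shrinking domains using the compactness of $\sigma_{S}(T)$.
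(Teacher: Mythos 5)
Your argument is correct: both inclusions are handled by the standard mechanism (divisibility of the intrinsic function $Q_{f(s_0)}\circ f$ by $Q_{s_0}$, respectively invertibility of its pointwise reciprocal, combined with the product rule of the $S$-functional calculus on intrinsic functions), and you correctly flag the only delicate points, namely the real-point/double-zero case and the homomorphism property. Note that the paper itself gives no proof of this theorem but simply cites \cite{Colombo-Sabadini-Struppa-11} (Theorem 4.13.2), and your proof is essentially the one given in that reference, so there is nothing to add beyond this confirmation.
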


\section{Power-bounded quaternionic operators}
In this section, we generalize the concept of power-boundedness into the quaternionic operators and  study their properties.
  \begin{definition}Let $X$ be a two-sided vector space over $\mathbb{H}$ and  $T\in \mathcal{B}(X)$. The quaternionic operator  $T$ is called  power-bounded if
$$p(T):=\sup_{n\in \mathbb{N}}\|T^{n}\|<\infty.$$
\end{definition}

\begin{remark}\label{series-spetrum}
Let $T\in \mathcal{B}(X)$. From the Cauchy-Hadamard formula, the radius of convergence of the power series $\sum_{n=0}^{\infty} x^{n} \|T^n \|$ is  $1/(\limsup_{n\rightarrow\infty} \|T^{n}\|^{1/n})$. Combining this with  Theorem \ref{spectral-radius}, we infer that the series  $\sum_{n=0}^{\infty}  \|T^n \||s|^{-n-1}$ converges for $|s|>r_{S}(T)$ and then  so does $\sum_{n=0}^{\infty}  T^n s^{-n-1}$ in  (\ref{Spectrum-Taylor}). Hence,
$$S_{L}^{-1}(s,T) =\sum_{n=0}^{\infty}  T^n s^{-n-1}, \quad  |s|>r_{S}(T).$$
Furthermore, by Theorem \ref{spectral-radius}, the power-bounded quaternionic  operator $T$ has the property $$r_{S}(T)\leq1.$$
\end{remark}

In particular, (\ref{Spectrum-Taylor})  remains true for  power-bounded operator $T$ and $s\in \mathbb{H}$ with $|s|>1$ and then
\begin{eqnarray}\label{Kreiss-condition}
 \|S_{L}^{-1}(s,T)\|\leq\sum_{n=0}^{\infty}  \|T\|^n |s|^{-n-1}\leq p(T) \sum_{n=0}^{\infty}   |s|^{-n-1}= \frac{p(T)}{|s|-1},\quad |s|>1.
 \end{eqnarray}

As in the classical case, we can define the so-called  \textit{Kreiss condition} in the quaternionic setting.
\begin{definition}[Kreiss  condition]\label{Kreiss}
Let   $T\in \mathcal{B}(X)$. The quaternionic operator $T$  satisfies  the  Kreiss condition if
there exists a constant $C>0$ such that
 $$ \|S_{L}^{-1}(s,T)\|\leq\frac{C}{ |s|-1}, \quad |s|>1.$$  \end{definition}
Obviously, from (\ref{Kreiss-condition}) and Definition \ref{Kreiss}, all power-bounded quaternionic operators satisfy necessarily  the Kreiss condition.
Conversely,  the complex linear operator  $T$  is power-bounded when $T$ is defined on  a complex finite-dimensional Banach space   and satisfies the Kreiss condition (cf. \cite{LeVeque-Trefethen,Morton}), meanwhile this is not true for finite-dimensional sapces. Unfortunately, it is not clear  for the present authors in the quaternionic case.

We shall use the spherical Yosida approximation to give a equivalent characterization of  power-bounded quaternionic operators.
  \begin{definition}
Let $T\in \mathcal{B}(X)$. The  left and right spherical Yosida approximation of $T$ are defined as, respectively,
$$ \mathcal{Y}_{L}(s, T)=S_{L}^{-1}(s,T)s^{2}-s\mathcal{I}, \quad s\in\rho_{S}(T),$$
and
$$ \mathcal{Y}_{R}(s, T)=s^{2}S_{R}^{-1}(s,T)-s\mathcal{I}, \quad s\in\rho_{S}(T).$$
  \end{definition}
  Note that the spherical Yosida approximation of certain right linear closed operators appeared in \cite[Definition 4.5]{Colombo-Sabadini-11}.

  For $T\in \mathcal{B}(X)$  and   $s\in \mathbb{H}$ with $\|T\|<|s|$,  it follows from Theorem \ref{Spectrum-Taylor-theorem} that
 $$\| \mathcal{Y}_{L}(s, T)-T\|=\|\sum_{n=1}^{\infty}  T^{n+1} s^{-n}\|\leq \sum_{n=1}^{\infty} \| T\|^{n+1}|s|^{-n}=\frac{\| T\|^{2}}{|s|-\| T\|}\rightarrow 0, \ as \ |s|\rightarrow \infty.$$

The spherical Yosida approximation can be rewritten in the following form.
\begin{lemma}\label{Yosida-approximation-form}
For $T\in \mathcal{B}(X)$, it holds that
$$ \mathcal{Y}_{L}(s, T)=TS_{L}^{-1}(s,T)s,\quad s\in\rho_{S}(T).$$
\end{lemma}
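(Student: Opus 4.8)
The plan is to work directly from the defining formula $\mathcal{Y}_{L}(s,T)=S_{L}^{-1}(s,T)s^{2}-s\mathcal{I}$ and substitute the expression for the left $S$-resolvent operator, namely $S_{L}^{-1}(s,T)=Q_{s}(T)^{-1}(\overline{s}\mathcal{I}-T)$, which is valid for $s\in\rho_{S}(T)$. The goal is to manipulate the resulting operator identity until it matches $TS_{L}^{-1}(s,T)s = TQ_{s}(T)^{-1}(\overline{s}\mathcal{I}-T)s$. The key algebraic fact to exploit is that $Q_{s}(T)=T^{2}-2\,{\rm{Re}}\,(s)T+|s|^{2}\mathcal{I}$ is a polynomial in $T$ with real coefficients, so $T$, $Q_{s}(T)$ and $Q_{s}(T)^{-1}$ all commute with one another; moreover $\overline{s}\mathcal{I}$ commutes with $T$ (it is a scalar multiple of the identity), though one must be careful since $s$ and $\overline s$ themselves need not commute with everything when multiplied on the right — here, however, $s$ and $\overline s$ enter only through real combinations until the very end.

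First I would write $S_{L}^{-1}(s,T)s^{2} = Q_{s}(T)^{-1}(\overline{s}\mathcal{I}-T)s^{2}$. Since $s\overline{s}=\overline{s}s=|s|^{2}$ is real, I can rewrite $(\overline{s}\mathcal{I}-T)s^{2} = \overline{s}s\cdot s\mathcal{I} - Ts^{2} = |s|^{2}s\mathcal{I}-Ts^{2}$. Hence
\[
\mathcal{Y}_{L}(s,T) = Q_{s}(T)^{-1}\bigl(|s|^{2}s\mathcal{I}-Ts^{2}\bigr) - s\mathcal{I}
= Q_{s}(T)^{-1}\Bigl(|s|^{2}s\mathcal{I}-Ts^{2} - Q_{s}(T)s\Bigr).
\]
Now expand $Q_{s}(T)s = T^{2}s - 2\,{\rm{Re}}\,(s)Ts + |s|^{2}s\mathcal{I}$ (again using that the coefficients $2\,{\rm{Re}}\,(s)$ and $|s|^{2}$ are real scalars, so they slide past $T$ freely, and the right multiplication by $s$ is applied uniformly). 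Substituting, the $|s|^{2}s\mathcal{I}$ terms cancel and we are left with
\[
\mathcal{Y}_{L}(s,T) = Q_{s}(T)^{-1}\bigl(-Ts^{2} - T^{2}s + 2\,{\rm{Re}}\,(s)Ts\bigr)
= Q_{s}(T)^{-1}T\bigl(-s^{2} - Ts + 2\,{\rm{Re}}\,(s)s\mathcal{I}\bigr).
\]

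Finally I would recognize the bracketed operator as $T(2\,{\rm{Re}}\,(s)s\mathcal{I} - s^{2}\mathcal{I} - Ts) = T(\overline{s}\mathcal{I}-T)s$, using the scalar identity $2\,{\rm{Re}}\,(s)s - s^{2} = (s+\overline{s})s - s^{2} = \overline{s}s = s\overline{s}$, so that $2\,{\rm{Re}}\,(s)s\mathcal{I}-s^{2}\mathcal{I}-Ts = \overline{s}s\mathcal{I}-Ts = (\overline{s}\mathcal{I}-T)s$. Therefore, using that $Q_{s}(T)^{-1}$ commutes with $T$,
\[
\mathcal{Y}_{L}(s,T) = Q_{s}(T)^{-1}T(\overline{s}\mathcal{I}-T)s = TQ_{s}(T)^{-1}(\overline{s}\mathcal{I}-T)s = TS_{L}^{-1}(s,T)s,
\]
which is the claimed identity. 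The main obstacle — and the only place where genuine care is needed rather than routine bookkeeping — is handling the right-multiplications by the non-real quaternion $s$ correctly: one must verify at each step that every quaternionic scalar being moved past an operator is in fact real (the coefficients of $Q_{s}(T)$, and the products $s\overline{s}$, $2\,{\rm{Re}}\,(s)s$ after simplification), and that the single surviving right-factor of $s$ (and the $\overline{s}\mathcal{I}$) is carried along consistently. Once the reality of the relevant scalars is checked, the commutativity of $T$ with the polynomial $Q_{s}(T)$ and its inverse makes the rest purely formal.
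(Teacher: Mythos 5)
Your proof is correct and follows essentially the same route as the paper's: substitute $S_{L}^{-1}(s,T)=Q_{s}(T)^{-1}(\overline{s}\mathcal{I}-T)$ into the definition of $\mathcal{Y}_{L}(s,T)$, absorb the term $s\mathcal{I}=Q_{s}(T)^{-1}Q_{s}(T)s$, cancel, regroup using only real coefficients, and finally commute $T$ past $Q_{s}(T)^{-1}$ (the paper merely factors one copy of $s$ out on the right a step earlier). One small caution: your parenthetical claim that $\overline{s}\mathcal{I}$ commutes with $T$ is false in general for a non-real quaternion, since $T$ is only right linear; fortunately your actual computation never uses it, relying only on the reality of $2\,\mathrm{Re}(s)$, $|s|^{2}$ and $\overline{s}s$, which you do check.
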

\begin{proof}
Note that the commutative relation  $Q_{s}(T)T=Q_{s}(T)T$ implies that $Q_{s}(T)^{-1}T=Q_{s}(T)^{-1}T$ for $s\in\rho_{S}(T)$. Hence,
\begin{eqnarray*}
   \mathcal{Y}_{L}(s, T)
&=&  Q_{s}(T)^{-1}((\overline{s}\mathcal{I}-T)s-Q_{s}(T))s
\\
&=& Q_{s}(T)^{-1}( T\overline{s} -T^{2})s
\\
&=& TQ_{s}(T)^{-1}( \overline{s}\mathcal{I} -T)s
\\
&=& TS_{L}^{-1}(s,T)s,
\end{eqnarray*}
 as desired.
\end{proof}

Based on the observation in Lemma \ref{Yosida-approximation-form}, we define,  for $n\in \mathbb{N}$,
$$ \mathcal{Y}_{L}^{n}(s, T)=T^{n}S_{L}^{-n}(s,T)s^{n},\quad s\in\rho_{S}(T),$$
where
$$S_{L}^{-n}(s,T)=Q_{s}(T)^{-n}\sum_{m=0}^{n} \begin{pmatrix}n \\ m\end{pmatrix}(-T)^{m} \overline{s}^{n-m}.$$
Similarly, we can define,  for $n\in \mathbb{N}$,
$$ \mathcal{Y}_{R}^{n}(s, T)=s^{n}S_{R}^{-n}(s,T)T^{n},\quad s\in\rho_{S}(T),$$
where
$$S_{R}^{-n}(s,T)=\sum_{m=0}^{n} \begin{pmatrix}n \\ m\end{pmatrix}\overline{s}^{n-m}(-T)^{m} Q_{s}(T)^{-n}.$$

\begin{lemma}\label{spectrum-n}
For $T\in \mathcal{B}(X)$ and $n\in \mathbb{N}$, it holds that
$$S_{L}^{-n}(s,T)=\sum_{m=0}^{\infty} \begin{pmatrix}m+n-1 \\ n-1\end{pmatrix} T^{m}s^{-(m+n)}, \quad  |s|>r_{S}(T).$$
\end{lemma}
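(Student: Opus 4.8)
The plan is to prove the identity by expanding the $n$-th power of the left $S$-resolvent operator, using the series representation from Theorem \ref{Spectrum-Taylor-theorem} together with Remark \ref{series-spetrum}, which guarantees that
$$S_{L}^{-1}(s,T) =\sum_{m=0}^{\infty} T^{m}s^{-m-1}, \quad |s|>r_{S}(T),$$
and that this series converges absolutely in operator norm. The first step is to observe that the definition of $S_{L}^{-n}(s,T)$ given in the text is precisely the $n$-fold operator product $\bigl(S_{L}^{-1}(s,T)\bigr)^{n}$: indeed, since $Q_{s}(T)$ commutes with $T$ (hence $Q_{s}(T)^{-1}$ commutes with $T$, as noted in Lemma \ref{Yosida-approximation-form}), one has $S_{L}^{-1}(s,T)=Q_{s}(T)^{-1}(\overline{s}\mathcal{I}-T)$, and multiplying $n$ such factors together lets all the $Q_{s}(T)^{-1}$'s be collected on the left, yielding $Q_{s}(T)^{-n}(\overline{s}\mathcal{I}-T)^{n}$, which by the binomial theorem (valid since $T$ commutes with $\overline{s}\mathcal{I}$, as $\overline{s}$ is a real-part-plus-imaginary scalar — here one must be slightly careful, see below) equals $Q_{s}(T)^{-n}\sum_{m=0}^{n}\binom{n}{m}(-T)^{m}\overline{s}^{\,n-m}$. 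So $S_{L}^{-n}(s,T)=\bigl(S_{L}^{-1}(s,T)\bigr)^{n}$.

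Granting this, the second step is to compute the $n$-th Cauchy power of the geometric-type series. Writing $S_{L}^{-1}(s,T)=\sum_{m\ge 0}T^{m}s^{-m-1}$ and raising to the $n$-th power, absolute convergence permits rearrangement, and one collects the coefficient of $T^{k}s^{-(k+n)}$: it is the number of ways to write $k=m_1+\cdots+m_n$ with $m_i\ge 0$, i.e. $\binom{k+n-1}{n-1}$. Care is needed because the coefficients $s^{-m-1}$ are quaternions and do not a priori commute with the operator coefficients $T^{m}$; however, $s$ (and any real power, positive or negative, of $s$) lies in the commutative subalgebra $\mathbb{C}_{I_s}$ where $I_s = \mathrm{Im}(s)/|\mathrm{Im}(s)|$, and within a single series all scalars are powers of the \emph{same} $s$, so they commute among themselves, while $T$ is right $\mathbb{H}$-linear so right multiplication by scalars is well defined and associative. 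One should therefore keep the operator factors $T^{m_i}$ on the left and the scalar factors $s^{-m_i-1}$ on the right throughout the multiplication; the bookkeeping then reduces to the classical identity $\bigl(\sum_m x^m\bigr)^n=\sum_k \binom{k+n-1}{n-1}x^k$ for a commuting indeterminate $x=s^{-1}$, tensored against the commuting operator powers $T^k$. This gives exactly
$$S_{L}^{-n}(s,T)=\sum_{k=0}^{\infty}\binom{k+n-1}{n-1}T^{k}s^{-(k+n)}.$$

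The main obstacle I anticipate is the non-commutativity housekeeping, not the combinatorics: one must justify that the finite binomial expansion of $(\overline{s}\mathcal{I}-T)^{n}$ and the rearrangement of the $n$-fold Cauchy product are both legitimate in the quaternionic operator algebra. The binomial expansion is fine because $\overline{s}\mathcal{I}$ is a \emph{scalar} multiple of the identity and hence commutes with $T$ (this is where right $\mathbb{H}$-linearity and the two-sided structure are used), and the rearrangement of the Cauchy product is justified by absolute norm-convergence for $|s|>r_S(T)$, which is exactly the content of Remark \ref{series-spetrum}. Once these two points are pinned down, the proof is a one-paragraph computation. An even shorter route, if one prefers to bypass the explicit binomial form, is to take $\bigl(S_L^{-1}(s,T)\bigr)^n$ as the definition of $S_L^{-n}(s,T)$ from the start and simply quote the power-series identity; but since the text has \emph{defined} $S_L^{-n}$ via the binomial formula, the first step above is needed to reconcile the two expressions.
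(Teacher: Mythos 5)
Your argument breaks down at the commutation step, and in fact the intermediate claims are false, not merely unjustified. For a right $\mathbb{H}$-linear operator $T$ on a two-sided quaternionic space, the operator $q\mathcal{I}$ (for $q\in\mathbb{H}$) acts by \emph{left} multiplication, $v\mapsto qv$, and right linearity of $T$ only gives commutation with \emph{right} scalar multiplication; thus $\overline{s}\mathcal{I}$ does \emph{not} commute with $T$ unless $s$ is real, and likewise $s^{-a}$ cannot be moved past $T^{k}$ in the Cauchy-product rearrangement, since $T(s^{-a}v)\neq s^{-a}T(v)$ in general. Consequently neither of your two steps survives: $S_{L}^{-n}(s,T)\neq\bigl(S_{L}^{-1}(s,T)\bigr)^{n}$ in general, and the series in the lemma is not the $n$-fold Cauchy power of $\sum_{m\geq0}T^{m}s^{-m-1}$. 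A concrete counterexample: take $X=\mathbb{H}$, $T(v)=iv$, $s=2j$. Then $Q_{s}(T)=3\mathcal{I}$, so $S_{L}^{-1}(s,T)(v)=\tfrac{1}{3}(-2j-i)v$ and $\bigl(S_{L}^{-1}(s,T)\bigr)^{2}=-\tfrac{5}{9}\mathcal{I}$, whereas the defined operator $S_{L}^{-2}(s,T)=\tfrac{1}{9}\bigl(\overline{s}^{2}\mathcal{I}-2T\overline{s}+T^{2}\bigr)$ sends $v$ to $\tfrac{1}{9}(-5+4k)v$, which is also what the series $\sum_{m\geq0}(m+1)T^{m}s^{-(m+2)}$ gives. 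So your "shorter route'' of defining $S_{L}^{-n}$ as the operator power would prove a different (and false) statement, and the binomial identity $(\overline{s}\mathcal{I}-T)^{n}=\sum_{m}\binom{n}{m}(-T)^{m}\overline{s}^{\,n-m}$ already fails at $n=2$, since $(\overline{s}\mathcal{I}-T)^{2}$ contains the mixed term $\overline{s}T+T\overline{s}\neq 2T\overline{s}$.

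The paper avoids all of this by never forming operator powers of $S_{L}^{-1}$: it proves the equivalent identity
\begin{equation*}
Q_{s}(T)^{n}\sum_{m=0}^{\infty}\binom{m+n-1}{n-1}T^{m}s^{-(m+n)}=\sum_{m=0}^{n}\binom{n}{m}(-T)^{m}\overline{s}^{\,n-m},\qquad |s|>r_{S}(T),
\end{equation*}
by induction on $n$, using only commutations that are legitimate in this setting: $T$ commutes with $Q_{s}(T)$ (a polynomial in $T$ with \emph{real} coefficients), powers of $s$ and $\overline{s}$ commute with each other, and scalars are kept consistently to the right of the operator factors. If you want to salvage your outline, you would have to replace both of your steps by an argument of this type; as written, the proposal does not prove the lemma.
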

\begin{proof}
The desired formula  is equivalent to proving the following identity, for all  $n\in \mathbb{N}$,
 \begin{eqnarray}\label{formula-n}
 Q_{s}(T)^{n}\sum_{m=0}^{\infty} \begin{pmatrix}m+n-1 \\ n-1\end{pmatrix} T^{m}s^{-(m+n)}= \sum_{m=0}^{n} \begin{pmatrix}n \\ m\end{pmatrix}(-T)^{m} \overline{s}^{n-m}, \quad |s|>r_{S}(T). \end{eqnarray}
We shall show its validity by mathematical induction.  For $n=1$, (\ref{formula-n}) reduces into
$$Q_{s}(T)\sum_{m=0}^{\infty}  T^{m}s^{-(m+1)}= \sum_{m=0}^{1} \begin{pmatrix}1 \\ m\end{pmatrix}(-T)^{m} \overline{s}^{1-m} =\overline{s}\mathcal{I}-T, \quad |s|>r_{S}(T),$$
which has been   observed exactly by Remark \ref{series-spetrum}. More details can be founded in \cite[Theorem 4.7.4]{Colombo-Sabadini-Struppa-11}.
Now assume that the formula in  (\ref{formula-n}) holds for some positive integer $n$. Let us prove the same formula
  holds for $n+1$.

From the  combinatorial identity
$$   \sum_{l=0}^{m} \begin{pmatrix}m+n-1  \\ n-1\end{pmatrix} =\begin{pmatrix}m+n  \\ n\end{pmatrix}, \quad n\in \mathbb{N},$$
we get, for $s\in \mathbb{H}$ with $|s|>r_{S}(T)$,
\begin{eqnarray*}
  & &  \sum_{m=0}^{\infty} \begin{pmatrix}m+n  \\ n\end{pmatrix} T^{m}s^{-(m+n+1)}
     \\
&=&  \sum_{m=0}^{\infty} \sum_{l=0}^{m} \begin{pmatrix}m+n-1  \\ n-1\end{pmatrix} T^{m}s^{-(m+n+1)}
\\
&=& \sum_{m=0}^{\infty} \sum_{l=0}^{\infty} \begin{pmatrix}m+n-1  \\ n-1\end{pmatrix} T^{m+l}s^{-(m+n+l+1)}
\\
&=&  \sum_{m=0}^{\infty}\begin{pmatrix}m+n-1  \\ n-1\end{pmatrix}T^{m} \Big( \sum_{l=0}^{\infty}  T^{l}s^{-(l+1)} \Big) s^{-(m+n)}.
\end{eqnarray*}
Hence, under  the result of (\ref{formula-n}) for $n=1$ and $n$ and keeping in mind of the commutative relationships   $Q_{s}(T)T=Q_{s}(T)T$ and $s\overline{s}=\overline{s}s$, we obtain
\begin{eqnarray*}
& &    Q_{s}(T)^{n+1}\sum_{m=0}^{\infty} \begin{pmatrix}m+n \\ n\end{pmatrix} T^{m}s^{-(m+n+1)}
\\
&=& Q_{s}(T)^{n+1}  \sum_{m=0}^{\infty} \begin{pmatrix}m+n-1 \\ n-1\end{pmatrix} T^{m}  S_{L}^{-1}(s,T) s^{-(m+n)}
\\
&=& Q_{s}(T)^{n}\sum_{m=0}^{\infty} \begin{pmatrix}m+n-1 \\ n-1\end{pmatrix} T^{m} (\overline{s}\mathcal{I}-T)s^{-(m+n)}
\\
&=&  Q_{s}(T)^{n}\sum_{m=0}^{\infty} \begin{pmatrix}m+n-1 \\ n-1\end{pmatrix} T^{m}s^{-(m+n)}  \overline{s}
-TQ_{s}(T)^{n}\sum_{m=0}^{\infty} \begin{pmatrix}m+n-1 \\ n-1\end{pmatrix} T^{m}s^{-(m+n)}
\\
&=&\sum_{m=0}^{n}  \begin{pmatrix}n \\ m\end{pmatrix} (-T)^{m} \overline{s}^{n+1-m}
+\sum_{m=0}^{n}\begin{pmatrix}n \\ m\end{pmatrix}(-T)^{m+1} \overline{s}^{n-m}
\\
&=&\overline{s}^{n+1}+\sum_{m=1}^{n}  \begin{pmatrix}n \\ m\end{pmatrix} (-T)^{m} \overline{s}^{n+1-m}
+\sum_{m=1}^{n}\begin{pmatrix}n \\ m-1\end{pmatrix}(-T)^{m} \overline{s}^{n+1-m}+(-T)^{n+1}
\\
&=& \overline{s}^{n+1} +\sum_{m=1}^{n} \Big(\begin{pmatrix}n \\ m\end{pmatrix}+\begin{pmatrix}n \\ m-1\end{pmatrix}\Big)(-T)^{m} \overline{s}^{n+1-m}+(-T)^{n+1}
\\
&=& \overline{s}^{n+1}+\sum_{m=1}^{n} \begin{pmatrix}n+1 \\ m\end{pmatrix}(-T)^{m} \overline{s}^{n+1-m}+(-T)^{n+1}
\\
&=&   \sum_{m=0}^{n+1} \begin{pmatrix}n+1 \\ m\end{pmatrix}(-T)^{m} \overline{s}^{n+1-m},
\end{eqnarray*}
which completes the proof.
\end{proof}

Now  we prsent  a discrete analog of Theorem \ref{Hille-Yosida-Phillips} for power-bounded quaternionic operators. Its classical complex result is due to   Nevanlinna \cite[Theorem 2.7.1]{Nevanlinna}, which    is essentially same with the observation in \cite{Gibson}.

 \begin{theorem}\label{Nevanlinna-H}
Let $T\in \mathcal{B}(X)$.  The following three statements are equivalent:

(i) $T$ is a power-bounded  quaternionic operator;

(ii)there exits some constant $C>0$ such that
\begin{eqnarray}\label{discrete-condition}
\|\mathcal{Y}_{L}^{n}(s, T)\|\leq \frac{C}{(1-\frac{1}{|s|})^{n}}, \quad |s|>1, n\in \mathbb{N}; \end{eqnarray}

(iii)there exits some constant $C>0$ such that
$$\|\mathcal{Y}_{R}^{n}(s, T)\|\leq \frac{C}{(1-\frac{1}{|s|})^{n}}, \quad |s|>1, n\in \mathbb{N}.$$
\end{theorem}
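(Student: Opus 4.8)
The plan is to prove the cycle of implications $(i)\Rightarrow(ii)\Rightarrow(i)$ together with $(i)\Leftrightarrow(iii)$, exploiting the symmetry between the left and right $S$-resolvent formulations so that the right-hand statement follows from the left-hand one by an entirely parallel argument (or by passing to adjoints/conjugates). So the real content is the equivalence of $(i)$ and $(ii)$.

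For $(i)\Rightarrow(ii)$: assume $p(T)=\sup_n\|T^n\|<\infty$. By Remark \ref{series-spetrum} we have $r_S(T)\le 1$, so for every $|s|>1$ the series expansion of Lemma \ref{spectrum-n} is valid. Combining it with the defining identity $\mathcal{Y}_L^n(s,T)=T^n S_L^{-n}(s,T)s^n$, I would write
\begin{eqnarray*}
\mathcal{Y}_L^n(s,T)=T^n\Big(\sum_{m=0}^{\infty}\begin{pmatrix}m+n-1\\ n-1\end{pmatrix}T^m s^{-(m+n)}\Big)s^n
=\sum_{m=0}^{\infty}\begin{pmatrix}m+n-1\\ n-1\end{pmatrix}T^{n+m}s^{-m}.
\end{eqnarray*}
Taking norms and using $\|T^{n+m}\|\le p(T)$ gives
\begin{eqnarray*}
\|\mathcal{Y}_L^n(s,T)\|\le p(T)\sum_{m=0}^{\infty}\begin{pmatrix}m+n-1\\ n-1\end{pmatrix}|s|^{-m}=p(T)\,\frac{1}{(1-|s|^{-1})^{n}},
\end{eqnarray*}
where the last equality is the generalized binomial series $\sum_{m\ge 0}\binom{m+n-1}{n-1}x^m=(1-x)^{-n}$ for $|x|<1$ with $x=|s|^{-1}$. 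This is exactly \eqref{discrete-condition} with $C=p(T)$, and the analogous computation with $S_R^{-n}$ from Lemma \ref{spectrum-n}'s right-hand counterpart yields $(iii)$.

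For $(ii)\Rightarrow(i)$: this is the harder direction and where I expect the main obstacle. The idea is to recover $T^n$ from $\mathcal{Y}_L^n(s,T)$ by an integral/limit procedure analogous to how, in the classical continuous theory, one recovers the semigroup from powers of the Yosida approximation. Concretely, I would start from the identity just derived, $\mathcal{Y}_L^n(s,T)=\sum_{m\ge0}\binom{m+n-1}{n-1}T^{n+m}s^{-m}$, and try to isolate $T^n$ as the leading ($m=0$) term by a suitable averaging in $s$. One natural route: integrate $\mathcal{Y}_L^n(s,T)$ against $ds_I$ over a circle $|s|=R>1$ in a slice plane $\mathbb{C}_I$; by the Cauchy-type orthogonality relations underlying Lemma \ref{operator-formula}, only the $m=0$ term survives, giving $\frac{1}{2\pi}\int_{|s|=R, s\in\mathbb{C}_I}\mathcal{Y}_L^n(s,T)\,ds_I\cdot(\text{normalization})=T^n$, while the bound \eqref{discrete-condition} controls the integrand uniformly; letting $R\to\infty$ (or optimizing $R$) then forces $\|T^n\|\le C\cdot\inf_{R>1}(1-1/R)^{-n}\cdot(\text{something})$. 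The delicate point is that $\inf_{R>1}(1-1/R)^{-n}=1$ is attained only as $R\to\infty$, so one must check the integral representation survives that limit; alternatively, following Nevanlinna, one chooses $R=R_n$ growing with $n$ (e.g. $R_n\sim n$) so that $(1-1/R_n)^{-n}\to e$ stays bounded, and shows the remaining factors stay bounded too. Handling the non-commutativity of quaternions inside these contour manipulations — making sure the slice Cauchy integral picks out precisely the $T^n$ term and that $ds_I$ is inserted on the correct side — is the technical crux; I would lean on Lemma \ref{operator-formula} and the structure of $S_L^{-n}$ to justify each step, and then derive $(iii)\Rightarrow(i)$ symmetrically, closing the cycle.
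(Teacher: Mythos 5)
Your (i)$\Rightarrow$(ii) direction is exactly the paper's argument: expand $\mathcal{Y}_L^n(s,T)$ via Lemma \ref{spectrum-n}, bound $\|T^{n+m}\|$ by $p(T)$, and sum the binomial series $\sum_{m\ge0}\binom{m+n-1}{n-1}|s|^{-m}=(1-|s|^{-1})^{-n}$; dismissing (iii) by the symmetric computation is also what the paper does.

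The problem is (ii)$\Rightarrow$(i), which you leave as a sketch with the crux admittedly unresolved, and whose central identity is wrong as written. Integrating the expansion $\mathcal{Y}_L^n(s,T)=\sum_{m\ge0}\binom{m+n-1}{n-1}T^{n+m}s^{-m}$ against $ds_I$ over the circle $|s|=R$ in $\mathbb{C}_I$ does \emph{not} isolate the $m=0$ term: since $\int_{|s|=R,\,s\in\mathbb{C}_I}s^{-m}\,ds_I=2\pi R^{1-m}\delta_{m,1}$, your integral yields $n\,T^{n+1}$ (up to the factor $2\pi$), not $T^n$, and no scalar ``normalization'' fixes that; you would need the Cauchy kernel $s^{-1}ds_I$ instead. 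With that correction your contour route can be completed (the non-commutativity you worry about is harmless because every factor to the right of $T^{n+m}$ lies in the single slice $\mathbb{C}_I$, and the limit $R\to\infty$, or the choice $R_n\sim n$, causes no difficulty since the representation holds for every $R>\max(1,r_S(T))$ and $n$ is fixed). But the contour apparatus is unnecessary, and the simple idea you are missing is the one the paper uses: for each fixed $n$, the tail of the same expansion gives $\|\mathcal{Y}_L^n(s,T)-T^n\|\le\|T\|^n\bigl((1-\|T\||s|^{-1})^{-n}-1\bigr)\to0$ as $|s|\to\infty$, while \eqref{discrete-condition} gives $\limsup_{|s|\to\infty}\|\mathcal{Y}_L^n(s,T)\|\le C$; combining the two yields $\|T^n\|\le C$ for all $n$. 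As submitted, your (ii)$\Rightarrow$(i) is a plan resting on an erroneous key formula and acknowledged unverified steps, so it does not yet constitute a proof.
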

\begin{proof}
We only prove the equivalence of (i) and (ii).

 (i) $\Rightarrow$ (ii)
From Lemma \ref{spectrum-n}, we have
\begin{eqnarray}\label{Yosida-approximation}
 \mathcal{Y}_{L}^{n}(s, T)=T^{n}\sum_{m=0}^{\infty} \begin{pmatrix}m+n-1 \\ n-1\end{pmatrix} T^{m}s^{-m}, \quad |s|>r_{S}(T).\end{eqnarray}
 Hence, for $s\in\mathbb{H}$ with $|s|>1$ $(\geq r_{S}(T)$ by Remark \ref{series-spetrum}$)$, we get
 \begin{eqnarray*}
   \|\mathcal{Y}_{L}^{n}(s, T)\|
&\leq &  \sum_{m=0}^{\infty} \begin{pmatrix}m+n-1 \\ n-1\end{pmatrix}\|T\|^{m+n}|s|^{-m}
\\
& \leq & p(T) \sum_{m=0}^{\infty} \begin{pmatrix}m+n-1 \\ n-1\end{pmatrix} |s|^{-m}
\\
&=& \frac{p(T)}{(1-|s|^{-1})^{n}}.
\end{eqnarray*}
 (ii) $\Rightarrow$ (i)
Under the condition of (\ref{discrete-condition}), we obtain
$$\limsup_{|s|\rightarrow \infty}\|\mathcal{Y}_{L}^{n}(s, T)\|\leq C.$$
 For $|s|>r_{S}(T)$, it follows from (\ref{Yosida-approximation}) that
  \begin{eqnarray*}
  \|\mathcal{Y}_{L}^{n}(s, T)-T^{n}\|
&\leq &  \sum_{m=1}^{\infty} \begin{pmatrix}m+n-1 \\ n-1\end{pmatrix} \|T \|^{m+n}|s|^{-m}
\\
&=& \|T \|^{n} (\frac{1}{(1-\|T\||s|^{-1})^{n}}-1)\rightarrow 0, \ as \ |s|\rightarrow \infty.
\end{eqnarray*}
 Hence, combining those two results above, we get $\|T ^{n}\|\leq C$, as desired.

 \end{proof}


Enclosing this section, we give a sufficient condition of the power-boundness for quaternionic operators.
 \begin{theorem}\label{sufficient-condition}
Let $\mathbb{B}$ denote  the open unit ball $\{q\in \mathbb{H}: |q|<1\}$ and $T\in \mathcal{B}(X)$. If $\Gamma_{S} (T) :=\sigma_{S} (T)\cap \partial \mathbb{B}\subseteq \{1\}$ and there exist    constants $C>0$ and $0<\alpha\leq 1$ such that
  \begin{eqnarray}\label{Ritt-condition-2}\|S_{L}^{-2}(s,T)\|\leq\frac{C}{|s-1|^{1+\alpha}}, \quad |s|>1,  \end{eqnarray}
 then $T$ is  power-bounded.
 \end{theorem}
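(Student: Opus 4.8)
The plan is to deduce power-boundedness from a Cauchy-integral representation of $T^n$ over a path that hugs the $S$-spectrum, exploiting the fact that the only point of $\sigma_S(T)$ on $\partial\mathbb{B}$ is $\{1\}$, together with the resolvent growth estimate \eqref{Ritt-condition-2} near that point and the Kreiss-type bound \eqref{Kreiss-condition} away from it. First I would fix $I\in\mathbb{S}$ and, for a small parameter $\delta>0$, build an axially symmetric bounded slice Cauchy domain $\Omega_\delta$ whose boundary in $\mathbb{C}_I$ consists of an arc of the circle $|s|=1+\tfrac{1}{n}$ (or $|s|=1+c\delta$) together with a short detour around $s=1$ at distance on the order of $\tfrac1n$, chosen so that $\sigma_S(T)\subset\Omega_\delta$; this is legitimate by Theorem \ref{Compactness} since $\sigma_S(T)$ is compact and axially symmetric and meets $\partial\mathbb{B}$ only at $1$. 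By Lemma \ref{operator-formula},
$$T^n=\frac{1}{2\pi}\int_{\partial(\Omega_\delta)_I} S_L^{-1}(s,T)\,ds_I\,s^n,$$
and the task is to bound this integral uniformly in $n$.

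The key technical step is to pass from the first-order resolvent $S_L^{-1}(s,T)$ to the second-order resolvent $S_L^{-2}(s,T)$, since the hypothesis controls only the latter near $s=1$. I would use the identity from Lemma \ref{spectrum-n} (with $n=1,2$) relating $S_L^{-1}$ and $S_L^{-2}$, or equivalently integrate by parts in the Cauchy formula: writing $s^n=\frac{1}{n+1}\frac{d}{ds}(s^{n+1})$ along $\partial(\Omega_\delta)_I$ and integrating by parts converts $\int S_L^{-1}(s,T)\,ds_I\,s^n$ into $-\frac{1}{n+1}\int \bigl(\tfrac{d}{ds}S_L^{-1}(s,T)\bigr)\,ds_I\,s^{n+1}$, and the derivative of the $S$-resolvent is expressible through $S_L^{-2}(s,T)$ (this mirrors the classical fact $\frac{d}{d\lambda}R_\lambda=-R_\lambda^2$; one should check the precise quaternionic analogue using the explicit form $S_L^{-2}(s,T)=Q_s(T)^{-2}\sum_{m=0}^2\binom{2}{m}(-T)^m\bar s^{2-m}$, keeping track of non-commutativity). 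The gain is the extra factor $\tfrac1{n+1}$, which must absorb the stronger singularity $|s-1|^{-1-\alpha}$ of $S_L^{-2}$.

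With that in hand I would estimate the integral by splitting $\partial(\Omega_\delta)_I$ into the near part (within distance $\sim\tfrac1n$ of $s=1$) and the far part. On the far part one has $|s-1|\gtrsim\tfrac1n$ and $|s|=1+O(\tfrac1n)$, so $|s|^{n+1}\le e^{O(1)}$ is bounded, $\|S_L^{-2}(s,T)\|\le C|s-1|^{-1-\alpha}$ by \eqref{Ritt-condition-2}, and the arc length is $O(1)$; parametrizing by $|s-1|=t$ gives a contribution $\lesssim\frac{1}{n}\int_{c/n}^{C}t^{-1-\alpha}\,dt\lesssim\frac1n\cdot(c/n)^{-\alpha}=c^{-\alpha}n^{\alpha-1}$, which is bounded since $\alpha\le 1$. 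On the near part, of length $O(\tfrac1n)$ with $|s-1|\sim\tfrac1n$ and $|s|^{n+1}$ bounded, \eqref{Ritt-condition-2} gives $\|S_L^{-2}\|\lesssim n^{1+\alpha}$, so the contribution is $\lesssim\frac1n\cdot\frac1n\cdot n^{1+\alpha}=n^{\alpha-1}\le 1$. Summing the pieces yields $\|T^n\|\le M$ independent of $n$, i.e. $T$ is power-bounded. The main obstacle I anticipate is making the integration-by-parts step rigorous in the non-commutative setting — verifying the exact relation between $\frac{d}{ds}S_L^{-1}(s,T)$ and $S_L^{-2}(s,T)$ along $\partial(\Omega_\delta)_I$, and checking that the slice-regularity of the integrand (so that Lemma \ref{operator-formula} and deformation of $\partial\Omega_I$ apply) is preserved after the manipulation; a secondary point is arranging the contour so that the estimate $|s|^{n+1}=O(1)$ holds, which forces the radius to be taken as $1+\Theta(1/n)$ and hence makes $\Omega_\delta$ depend on $n$, something one must confirm is compatible with $\sigma_S(T)\subset\Omega$ for all large $n$ given $\Gamma_S(T)\subseteq\{1\}$.
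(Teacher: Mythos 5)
Your proposal is correct and follows essentially the same route as the paper: represent $T^n$ by the Cauchy formula of Lemma \ref{operator-formula} on a circle of radius $r>1$, integrate by parts to replace $S_L^{-1}$ by $S_L^{-2}$ at the cost of a factor $\frac{1}{n+1}$, and then let the radius shrink with $n$ so that $|s|^{n+1}$ stays bounded while the hypothesis \eqref{Ritt-condition-2} controls the singular integral (the paper takes $r=\sqrt{1+n^{-1/\alpha}}$ and invokes the Forelli--Rudin estimate, whereas you take $r=1+\Theta(1/n)$ and bound the angular integral by a direct near/far splitting around $s=1$ --- the same bookkeeping in different clothes, and your detour around $s=1$ is unnecessary since $\sigma_S(T)\subseteq\overline{\mathbb{B}}$ already follows from \eqref{Ritt-condition-2}).
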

\begin{proof}Under the condition in  (\ref{Ritt-condition-2}), we have  $\sigma_{S} (T)\subseteq  \overline{\mathbb{B}}$.
Hence,   by Lemma \ref{operator-formula},
 $$T^{n}=\frac{1}{2\pi}\int_{\partial  U_{I} } S^{-1}_{L}(s,T)ds_{I}s^{n},  \quad I\in \mathbb{S}, $$
 where $U=r\mathbb{B}$ with $r>1$.

Through a partial integration, we get
$$T^{n}=\frac{1}{2\pi(n+1)}\int_{\partial  U_{I}  } S^{-2}_{L}(s,T)ds_{I}s^{n+1}. $$
Hence, it follows  from the condition  (\ref{Ritt-condition-2})  that, for $0<\alpha\leq 1$,
\begin{eqnarray}\label{Forelli-Rudin-0}
\|T^{n}\|\leq\frac{C}{2\pi(n+1)}\int_{\partial  U_{I} } \frac{|s|^{n+1}}{|s-1|^{1+\alpha}} |ds_{I}|
=\frac{Cr^{n+1-\alpha}}{2\pi(n+1)}\int_{-\pi}^{\pi}   \frac{ d\theta}{|1-r^{-1}e^{I \theta}|^{1+\alpha}}. \end{eqnarray}
The Forelli-Rudin  estimate (see e.g. \cite[Theorem 1.12]{Zhu}) gives that, for some constant $C_{0}>0$,
$$\int_{-\pi}^{\pi}   \frac{d\theta }{|1-r^{-1}e^{I \theta}|^{1+\alpha}}\leq \frac{C_{0}}{(1-r^{-2})^{\alpha}}, \quad as \ r\rightarrow 1^{+}.  $$
 This  along with (\ref{Forelli-Rudin-0}) produces that, for some constant $C>0$ and all $n\in \mathbb{N}$,
$$\|T^{n}\|\leq\frac{Cr^{n+1+\alpha}}{(n+1)(r^{2}-1)^{\alpha}}, \quad as \ r\rightarrow 1^{+}.$$
Taking $r=\sqrt{1+\frac{1}{n^{\frac{1}{\alpha}}}}$, we arrive at
 $$ \|T^{n}\|\leq  \frac{ Cx_{n}}{ 1+\frac{1}{n} }\leq Cx_{n},\ as \ n\rightarrow \infty, $$
 where $ x_{n}$ is bounded above for $0<\alpha\leq 1$ and given by
  $$\log x_{n}=\frac{1}{2}(n+1+\alpha)\log (1+\frac{1}{n^{\frac{1}{\alpha}}})\sim \frac{1}{2n^{\frac{1}{\alpha}-1}},   \ as \ n\rightarrow \infty,$$
which completes the proof.
\end{proof}

\section{Katznelson-Tzafriri theorem}

\subsection{The complex case}
 In this subsection,  we recall primarily the classical Katznelson-Tzafriri theorem and some related results.

Let $\mathbb{D}$ be the unit open disk of complex plane $\mathbb{C}$ and $\mathbb{T}$ be its boundary. Denote by $W^{+}(\mathbb{D})$ the Banach algebra of all holomorphic functions of the form $f(z)=\sum_{n=0}^{\infty}  a_{n}z^n$ $(z\in  \mathbb{D})$ with $\| f \|_{W}=\sum_{n=0}^{\infty}  |a_{n}|<+\infty$.  The boundary functions of $W^{+}(\mathbb{D})$ form a closed subalgebra of the Wiener algebra $W(\mathbb{T})$  of all functions  $g(z)=\sum_{n=-\infty}^{\infty}  a_{n}z^n$ on $\mathbb{T}$ with $\| g \|_{W}=\sum_{n=-\infty}^{\infty}  |a_{n}|<+\infty$.

Let  $T$ be a $\mathbb{C}$-linear operator  on a complex Banach space. As usual, denote by $\sigma(T)$ the   spectrum of $T$.  In  1986, Katznelson and    Tzafriri proved a  celebrated  result \cite[Theorem 5]{Katznelson-Tzafriri} for  power-bounded  operators.
\begin{theorem}\label{Katznelson-Tzafriri-theorem}
Let $T$ be a power-bounded $\mathbb{C}$-linear operator  on a complex Banach space and $f\in W^{+}(\mathbb{D})$. If $f$   is of spectral synthesis in $ W(\mathbb{T})$ with respect to peripheral spectrum $\Gamma (T):=\sigma(T)\cap \mathbb{T}$, then
$$\lim_{n\rightarrow\infty} \|T^{n}f(T)\|=0.$$
\end{theorem}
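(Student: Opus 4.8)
The plan is to follow the three-layer architecture of the classical argument of Katznelson and Tzafriri: use power-boundedness to obtain a bounded functional calculus on $W^{+}(\mathbb{D})$, use the spectral-synthesis hypothesis to reduce to an approximant of $f$ that is flat near $\Gamma(T)$, and then run a Cauchy-integral estimate in which the factor $z^{n}$ forces the decay away from the peripheral spectrum.

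First I would dispose of the degenerate case $\Gamma(T)=\emptyset$: then $\sigma(T)\subseteq\mathbb{D}$, so the spectral radius $r(T)=\lim_{n}\|T^{n}\|^{1/n}<1$, whence $\|T^{n}\|\to0$ and $\|T^{n}f(T)\|\le\|T^{n}\|\,\|f(T)\|\to0$. From now on $\Gamma(T)\neq\emptyset$, hence $r(T)=1$. Power-boundedness enters through the elementary fact that, writing $h(z)=\sum_{k\ge0}\widehat{h}(k)z^{k}$, the map $h\mapsto h(T):=\sum_{k\ge0}\widehat{h}(k)T^{k}$ is a bounded unital algebra homomorphism $W^{+}(\mathbb{D})\to\mathcal{B}(X)$ with $\|h(T)\|\le p(T)\|h\|_{W}$, and that $(z\,h)(T)=T\,h(T)$; in particular $T^{n}f(T)=(z^{n}f)(T)$, so that $\|T^{n}f(T)-T^{n}g(T)\|=\|(z^{n}(f-g))(T)\|\le p(T)\|f-g\|_{W}$ for every $g\in W^{+}(\mathbb{D})$ and every $n$.

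The second step uses the hypothesis. ``$f$ is of spectral synthesis in $W(\mathbb{T})$ with respect to $\Gamma(T)$'' means precisely that the boundary function of $f$ lies in the $W(\mathbb{T})$-closure of the ideal of functions vanishing in a neighbourhood of $\Gamma(T)$; in particular $f$ vanishes on $\Gamma(T)$. Combined with the norm bound above, this lets me replace $f$, up to an error of $W^{+}$-norm $<\varepsilon$ (contributing at most $p(T)\varepsilon$ to $\limsup_{n}\|T^{n}f(T)\|$, uniformly in $n$), by an approximant whose interaction with $\sigma(T)$ is concentrated away from $\Gamma(T)$, and then estimate for that approximant. Concretely, for a good approximant I would regularise it as $f_{r}(z):=f(rz)$ with $r<1$, so that $f_{r}$ is holomorphic on $\{|z|<1/r\}$, with $\overline{\mathbb{D}}\supseteq\sigma(T)$ inside that disc, $\|f_{r}\|_{W}\le\|f\|_{W}$ and $\|f_{r}-f\|_{W}\to0$; then write $T^{n}f_{r}(T)=\frac{1}{2\pi i}\int_{C}z^{n}f_{r}(z)(z\mathcal{I}-T)^{-1}\,dz$ over the boundary $C$ of a bounded Cauchy domain $U$ with $\sigma(T)\subset U$ and $\overline{U}\subset\{|z|<1/r\}$, chosen so that $C$ runs inside $\{|z|\le\rho\}$ for some $\rho<1$ except along short arcs near $\Gamma(T)$ where it is forced to reach $|z|=1+O(1/n)$. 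On $C\cap\{|z|\le\rho\}$ the bound $|z|^{n}\le\rho^{n}$ sends that part of the integral to $0$; on the arcs near $\Gamma(T)$ one has $|z|^{n}=O(1)$, and one combines the Kreiss-type bound $\|(z\mathcal{I}-T)^{-1}\|\le p(T)/(|z|-1)$ for $|z|>1$ (read off from $(z\mathcal{I}-T)^{-1}=\sum_{k\ge0}T^{k}z^{-k-1}$) with the vanishing of $f$ near $\Gamma(T)$ supplied by synthesis, shrinking the arcs onto $\Gamma(T)$ at a rate matched to $n$ and letting $r\to1$. Finally $\|T^{n}f(T)\|\le\|T^{n}(f-f_{r})(T)\|+\|T^{n}f_{r}(T)\|$ yields the conclusion.

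The hardest point is the estimate on the arcs near $\Gamma(T)$. The naive product ``$|z|^{n}$ bounded $\times$ resolvent growth $\times$ arc length $\times\sup|f|$'' produces only a bounded, not a vanishing, contribution — already for $f(z)=1-z$ and a spectrum $\sigma(T)$ that is a disc internally tangent to $\mathbb{T}$ — so extracting the decay is exactly where the full strength of the spectral-synthesis hypothesis, rather than mere vanishing of $\widehat{f}$ on $\Gamma(T)$, must be invested: the Wiener-algebra approximants have to be made flat of sufficiently high order near $\Gamma(T)$, and the regularisation parameter $r$, the shrinking arcs and $n$ must be tuned together (in the same spirit as the trade-off exploited in Theorem \ref{sufficient-condition}). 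A secondary, purely geometric difficulty is that, since $r(T)=1$, no admissible contour lies inside $\mathbb{D}$, so $C$ must weave around the part of $\sigma(T)$ touching $\mathbb{T}$ — which is why one works with the boundary of a Cauchy domain rather than a circle.
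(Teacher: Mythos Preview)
The paper does not prove this theorem. It is stated in Section~4.1 as a classical result, with citation to \cite[Theorem~5]{Katznelson-Tzafriri}, purely as background before the paper's own contribution, which is the quaternionic analogue of the special case $f(z)=1-z$ (Theorem~\ref{Katznelson-Tzafriri-quaternion}). There is therefore no proof in the paper to compare against.

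On your sketch itself: the first two layers (bounded $W^{+}$-calculus with $\|h(T)\|\le p(T)\|h\|_{W}$, and reduction via the synthesis hypothesis to an approximant vanishing near $\Gamma(T)$) are correct and are indeed the opening moves of the Katznelson--Tzafriri argument. The gap is exactly where you locate it, and your proposed remedy does not close it. The contour-weaving-plus-parameter-tuning you describe is the mechanism behind Ritt-type results such as Theorem~\ref{sufficient-condition}, but that mechanism relies on a quantitative resolvent hypothesis like \eqref{Ritt-condition-2}; under mere power-boundedness you only have the Kreiss bound $\|(z\mathcal{I}-T)^{-1}\|\le p(T)/(|z|-1)$, and no amount of ``flatness of high order'' of a $W^{+}$-approximant will compensate for that growth along arcs shrinking onto $\Gamma(T)$.

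The actual argument---visible in the paper's proof of Theorem~\ref{Katznelson-Tzafriri-quaternion}, which follows Katznelson--Tzafriri for the special case $f(z)=1-z$---avoids contour deformation entirely. One stays on the circle $|s|=r>1$, introduces a smooth bump $\chi_{\varepsilon}$ concentrated near $\Gamma(T)$ (Lemma~\ref{technical-lemma}), and splits $T^{n}f(T)$ into a ``near'' part carrying $f\chi_{\varepsilon}$ and a ``far'' part carrying $f(1-\chi_{\varepsilon})$. The far part is handled by a single integration by parts in the angular variable, gaining a factor $1/(n+1)$, with the derivative uniformly bounded because the resolvent is holomorphic on the support of $1-\chi_{\varepsilon}$. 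The near part is recognised as a Fourier-coefficient convolution and estimated by $\|\{\widehat{\psi}(k)\}\|_{\ell^{\infty}}\cdot\|\{\widehat{f\chi_{\varepsilon}}(k)\}\|_{\ell^{1}}\le p(T)\cdot\varepsilon$; the smallness of the second factor is precisely what the spectral-synthesis hypothesis delivers (for $f(z)=1-z$ this is the content of Lemma~\ref{technical-lemma}). Letting $r\to1$ and then $\varepsilon\to0$ finishes the proof. No contour inside $\mathbb{D}$, no high-order vanishing, and no tuning of $r$ against $n$ are needed.
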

The assumption of spectral synthesis in Theorem \ref{Katznelson-Tzafriri-theorem} means that there is a functions sequence $\{g_{n}\}_{n\in \mathbb{N}} \subseteq W(\mathbb{T})$
 such that each $g_{n}$   vanishes on a neighbourhood of $\Gamma (T)$ in $\mathbb{T}$ and $\lim_{n\rightarrow\infty} \|g_{n}-f \|_{W}=0.$

The most popular version of Katznelson-Tzafriri theorem  is known as  the special case of  $f(z)=z-1$; see \cite[Theorem 1]{Katznelson-Tzafriri}.
 \begin{theorem}\label{Katznelson-Tzafriri-popular}
Let $T$ be a power-bounded $\mathbb{C}$-linear operator  on a complex Banach space. Then
$$\lim_{n\rightarrow\infty} \|T^{n}-T^{n+1} \|=0 \Leftrightarrow \Gamma (T) \subseteq \{1\}.$$
\end{theorem}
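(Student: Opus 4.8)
The plan is to deduce Theorem \ref{Katznelson-Tzafriri-popular} from Theorem \ref{Katznelson-Tzafriri-theorem} together with the polynomial spectral mapping theorem, handling the two implications separately.

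For the implication ``$\lim_{n\to\infty}\|T^{n}-T^{n+1}\|=0\ \Rightarrow\ \Gamma(T)\subseteq\{1\}$'', I would write $T^{n}-T^{n+1}=p_{n}(T)$ with $p_{n}(z)=z^{n}(1-z)$ and invoke the spectral mapping theorem for polynomials: if $\lambda\in\sigma(T)$, then $\lambda^{n}(1-\lambda)\in\sigma(T^{n}-T^{n+1})$, whence $|\lambda|^{n}|1-\lambda|\le\|T^{n}-T^{n+1}\|$. If in addition $\lambda\in\mathbb{T}$, this becomes $|1-\lambda|\le\|T^{n}-T^{n+1}\|\to0$, forcing $\lambda=1$; hence $\Gamma(T)=\sigma(T)\cap\mathbb{T}\subseteq\{1\}$.

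For the converse, assume $\Gamma(T)\subseteq\{1\}$. If $\Gamma(T)=\varnothing$, then power-boundedness gives $\sigma(T)\subseteq\overline{\mathbb{D}}$, and compactness of $\sigma(T)$ forces $\sigma(T)\subseteq\mathbb{D}$; the Gelfand spectral radius formula then yields $\limsup_{n}\|T^{n}\|^{1/n}<1$, so $\|T^{n}\|\to0$ geometrically and \emph{a fortiori} $\|T^{n}-T^{n+1}\|\to0$. If $\Gamma(T)=\{1\}$, I would apply Theorem \ref{Katznelson-Tzafriri-theorem} to $f(z)=z-1$, which lies in $W^{+}(\mathbb{D})$ (its Wiener norm equals $2$) and satisfies $T^{n}f(T)=T^{n+1}-T^{n}$; it then suffices to verify that $f$ is of spectral synthesis in $W(\mathbb{T})$ with respect to $\{1\}$, i.e.\ to produce $g_{n}\in W(\mathbb{T})$ vanishing on a neighbourhood of $1$ with $\|g_{n}-f\|_{W}\to0$.

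I expect this spectral-synthesis step --- which is exactly the classical fact that a singleton is a set of synthesis for $W(\mathbb{T})$ --- to be the main point. I would make it explicit: fix a $C^{\infty}$ function on $\mathbb{T}$ vanishing near $1$ and equal to $1$ off a small arc about $1$, concentrate it toward $1$ to get cut-offs $\psi_{n}$ with $\psi_{n}\equiv0$ on $|\arg z|\le 1/n$ and $\|\psi_{n}'\|_{\infty}=O(n)$, and set $g_{n}=\psi_{n}\,f$. Then $g_{n}$ vanishes near $1$, while $f-g_{n}=(1-\psi_{n})(z-1)$ is supported in an arc of length $O(1/n)$ on which $|z-1|=O(1/n)$ and $|(f-g_{n})'|=O(1)$; a Cauchy--Schwarz estimate on the Fourier side (using $\|u\|_{W}\le C\|u\|_{H^{1}}$ with $\|f-g_{n}\|_{L^{2}}=O(n^{-3/2})$ and $\|(f-g_{n})'\|_{L^{2}}=O(n^{-1/2})$) gives $\|f-g_{n}\|_{W}=O(n^{-1/2})\to0$. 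Feeding this into Theorem \ref{Katznelson-Tzafriri-theorem} yields $\|T^{n+1}-T^{n}\|=\|T^{n}f(T)\|\to0$, which closes the argument. (Alternatively, one may simply cite that singletons are sets of spectral synthesis in $W(\mathbb{T})$ and that $f(1)=0$.)
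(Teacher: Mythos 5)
Your proposal is correct, but it takes a genuinely different route from the one the paper actually carries out. The paper does not reprove Theorem \ref{Katznelson-Tzafriri-popular}: it quotes it from Katznelson--Tzafriri, and the proof it does supply is of the quaternionic analogue, Theorem \ref{Katznelson-Tzafriri-quaternion}, by a direct, self-contained harmonic-analysis argument. There the implication $\lim_{n}\|T^{n}-T^{n+1}\|=0\Rightarrow\Gamma(T)\subseteq\{1\}$ is obtained exactly as in your first paragraph (spectral mapping theorem plus spectral radius), while for the converse the paper represents $T^{n}$ as a Cauchy integral of the resolvent over a circle of radius $r>1$ (Lemma \ref{operator-formula}), splits the integrand using Nevanlinna's cutoff $\chi_{\epsilon}$ (Lemma \ref{technical-lemma}), bounds the piece supported near $1$ by an $\ell^{\infty}$--$\ell^{1}$ convolution estimate on Fourier coefficients (contributing $p(T)\epsilon$) and the piece away from $1$, where the resolvent is smooth since $\Gamma(T)\subseteq\{1\}$, by integration by parts (contributing $C(\epsilon)/(n+1)$), and then lets $r\to1$, $n\to\infty$. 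You instead deduce the statement from the general synthesis version, Theorem \ref{Katznelson-Tzafriri-theorem}, by verifying that $f(z)=z-1$ is of spectral synthesis with respect to $\{1\}$; your explicit cutoffs $\psi_{n}$ together with the bound $\|u\|_{W}\le\|u\|_{L^{2}}+C\|u'\|_{L^{2}}$ do give $\|f-g_{n}\|_{W}=O(n^{-1/2})$, and your side case $\Gamma(T)=\varnothing$ is fine (indeed vacuous under the paper's definition of synthesis), so the argument closes. The trade-off: your reduction is short but rests on the much deeper Theorem \ref{Katznelson-Tzafriri-theorem} as a black box, whereas the paper's direct method needs no such input --- note that its cutoff $\chi_{\epsilon}$ plays precisely the role of your $\psi_{n}$, so the two proofs are morally cousins --- and it is the direct method, not the reduction, that survives the passage to the quaternionic setting, where no analogue of Theorem \ref{Katznelson-Tzafriri-theorem} is currently available.
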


Regarding  proofs of Theorem \ref{Katznelson-Tzafriri-popular}, there  exit historically several other methods   (cf. \cite{Allan,Allan-Ransford,Vu}).  In fact,  a partial result  of Theorem \ref{Katznelson-Tzafriri-popular} for general Banach algebras was  obtained earlier in 1983 by Esterle  \cite[Theorem 9.1]{Esterle} saying that  if $a$ is an element of norm $1$ in a unital Banach algebra with its   spectrum   is $\{1\}$, then $\lim_{n\rightarrow\infty} \|a^{n}-a^{n+1} \|=0$.

  The origin of  Theorem \ref{Katznelson-Tzafriri-popular}  traces probably back to the so-called \textit{zero-two law: for any positive contraction    $T$    on a $L_{1}$-space, either $\|T^{n}-T^{n+1}\|=2$ for all $n\in \mathbb{N}$ or  $\lim_{n\rightarrow\infty} \|T^{n}-T^{n+1}\|=0$.} See for example \cite{Ornstein-Sucheston}.  The Katznelson-Tzafriri theorem has been obtained   substantial subsequent interest,  especially, which  turns into  one of  cornerstones in the asymptotic theory of operator semigroups.  For the history of Katznelson-Tzafriri theorem and related results,   see surveys e.g. \cite{Batty,Batty-Seifert} and \cite{Chill-Tomilov}.

\subsection{The quaternionic case}
In this subsection, we   shall establish a quaternionic analogue of the Katznelson-Tzafriri theorem (Theorem \ref{Katznelson-Tzafriri-popular}) by  the $S$-spectrum.

\begin{theorem}\label{Katznelson-Tzafriri-quaternion}
Let $T$ be a  power-bounded quaternionic operator    on a  two-sided quaternionic Banach space $X$. Then
$$\lim_{n\rightarrow\infty} \|T^{n}-T^{n+1} \|=0 \Leftrightarrow   \Gamma_{S} (T) \subseteq \{1\},$$
where $\Gamma_{S} (T)$ is defined as in Theorem \ref{sufficient-condition}.
\end{theorem}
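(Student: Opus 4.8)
The plan is to prove the two implications separately, modeling the argument on the complex case but replacing the holomorphic functional calculus and the classical spectrum by the $S$-functional calculus and the $S$-spectrum.

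\medskip

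\textbf{The direction $\Gamma_S(T)\subseteq\{1\}\Rightarrow\lim_n\|T^n-T^{n+1}\|=0$.} Here I would apply Theorem \ref{sufficient-condition} with the function $f(q)=q-1$, after checking that its hypothesis \eqref{Ritt-condition-2} is automatic. Observe that $T^n-T^{n+1}=T^n(\mathcal I-T)$, which is precisely $(\,T^n f(T)\,)$ up to a sign for the slice-preserving polynomial $f(q)=1-q$. Since $T$ is power-bounded, Remark \ref{series-spetrum} gives $r_S(T)\le1$, so $\sigma_S(T)\subseteq\overline{\mathbb B}$ and by hypothesis $\sigma_S(T)\cap\partial\mathbb B\subseteq\{1\}$. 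The Taylor-series expansion of Lemma \ref{spectrum-n} for $n=2$ gives, for $|s|>1$,
\[
S_L^{-2}(s,T)=\sum_{m=0}^{\infty}(m+1)\,T^m s^{-(m+2)},
\]
so $\|S_L^{-2}(s,T)\|\le p(T)\sum_{m\ge0}(m+1)|s|^{-(m+2)}=p(T)|s|^{2}(|s|^{2}-1)^{-2}\cdot|s|^{-2}$; a short estimate of $(|s|-1)$ against $|s-1|$ near $s=1$ on a circle $|s|=r$ then yields a bound of the shape $C/|s-1|^{2}$, i.e.\ \eqref{Ritt-condition-2} with $\alpha=1$. However, this crude bound is uniform in direction and does \emph{not} reflect that $s=1$ is the only peripheral point; to get $\alpha=1$ honestly one must use $\Gamma_S(T)\subseteq\{1\}$ to improve the resolvent growth off the point $1$. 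This is exactly the subtlety I expect to have to argue carefully: away from $s=1$ the pseudo-resolvent is bounded, so only a neighbourhood of $1$ contributes, and there $|s-1|$ and $|s|-1$ are comparable. Granting \eqref{Ritt-condition-2}, Theorem \ref{sufficient-condition} concludes power-boundedness but not decay; so instead I would mimic its proof directly: write $T^n(\mathcal I-T)$ as a contour integral $\frac1{2\pi}\int_{\partial U_I}S_L^{-1}(s,T)\,ds_I\,s^n(1-s)$ over $U=r\mathbb B$, integrate by parts twice to introduce $S_L^{-2}(s,T)$ and a factor $s^{n+1}(1-s)/(n+1)$, note the extra vanishing factor $(1-s)$ kills the singularity, and run the Forelli--Rudin estimate with the improved exponent to obtain $\|T^n-T^{n+1}\|\le C r^{n}/(n+1)\cdot(\text{bounded})$ and then optimize $r=r_n\to1^+$ to force the bound to $0$.

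\medskip

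\textbf{The direction $\lim_n\|T^n-T^{n+1}\|=0\Rightarrow\Gamma_S(T)\subseteq\{1\}$.} I would argue by contradiction: suppose $s_0\in\Gamma_S(T)$ with $s_0\neq1$. Since $\sigma_S(T)$ is axially symmetric (Theorem \ref{Compactness}), the whole sphere $[s_0]=x_0+y_0\mathbb S$ lies in $\sigma_S(T)$; pick the representative $\lambda_0=x_0+y_0 i\in\mathbb C_i$ on the unit circle with $\lambda_0\neq1$. Apply the $S$-spectral mapping theorem (Theorem \ref{spectral-mapping}) to the slice-preserving function $f(q)=q-q^2=q(1-q)$, which lies in $\mathcal N_{\sigma_S(T)}$: then $\sigma_S(T-T^2)=f(\sigma_S(T))\ni f(\lambda_0)=\lambda_0(1-\lambda_0)\neq0$. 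Hence $r_S(T-T^2)\ge|\lambda_0(1-\lambda_0)|>0$, and by the $S$-spectral radius theorem (Theorem \ref{spectral-radius}), $\|(T-T^2)^n\|^{1/n}\to r_S(T-T^2)>0$, so $\|(T-T^2)^n\|$ does not tend to $0$. The remaining step is to deduce from this that $\|T^n-T^{n+1}\|$ does not tend to $0$ either. Here power-boundedness enters: since $(T-T^2)^n=T^n(\mathcal I-T)^n$ and $T$ commutes with $\mathcal I-T$, one estimates $\|(T-T^2)^n\|=\|T^n(\mathcal I-T)^{n-1}(\mathcal I-T)\|\le\|(\mathcal I-T)^{n-1}\|\,\|T^n-T^{n+1}\|$ — but $\|(\mathcal I-T)^{n-1}\|$ may grow, so this direct bound is too weak. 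Instead I would use the standard telescoping trick: $T^n-T^{n+1}$ relates to $(T-T^2)^n$ through the identity $T^{2n}(\mathcal I-T)^n=(T^n-T^{n+1})\cdot$ (polynomial in $T$ of norm controlled by $p(T)$), more precisely $(T-T^2)^n=T^{n}(T-T^{2})^{n}T^{-n}$ is not available, so the cleanest route is: from $\|(T-T^2)^n\|\not\to0$ and the binomial expansion $(T-T^2)^n=\sum_{k=0}^n\binom{n}{k}(-1)^k T^{n+k}$, together with $\|T^{m}-T^{m+1}\|\to0$ implying $\|T^{m}-T^{m+j}\|\le\sum_{i=0}^{j-1}\|T^{m+i}-T^{m+i+1}\|$ is small uniformly for bounded $j$ — this shows $(T-T^2)^n$ would be small, a contradiction. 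I expect this last bookkeeping to be the main obstacle in this direction, and I would handle it by the Esterle--Katznelson--Tzafriri-style observation that, under power-boundedness, $\|T^n-T^{n+1}\|\to0$ forces $\|T^n(\mathcal I-T)^k\|\to0$ for every fixed $k$, hence in particular (taking $k=n$ is not fixed, so one argues more carefully with a splitting $n=k+(n-k)$) a contradiction with the spectral-radius lower bound.

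\medskip

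\textbf{Main obstacle.} The genuinely delicate point is the passage, in the forward direction, from the soft information $\Gamma_S(T)\subseteq\{1\}$ to the quantitative resolvent estimate \eqref{Ritt-condition-2} with the correct exponent, i.e.\ showing that the only singularity of $S_L^{-2}(s,T)$ on $\partial\mathbb B$ is at $s=1$ and is of order at most $|s-1|^{-2}$. In the complex scalar case this is where spectral synthesis of $f(z)=z-1$ with respect to $\Gamma(T)$ does the work; in the quaternionic setting I would replace that by a direct compactness argument on $\partial\mathbb B\setminus B(1,\varepsilon)$ (where $Q_s(T)^{-1}$ is bounded, since that set lies in $\rho_S(T)$ by hypothesis and $\rho_S(T)$ is open) combined with the near-$1$ comparison $|s|-1\asymp|s-1|$, so that the contour integral over $\partial U_I$ localizes near $s=1$ and the Forelli--Rudin estimate applies. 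If establishing $\alpha=1$ rigorously proves awkward, an alternative is to bypass Theorem \ref{sufficient-condition} entirely and run an Allan--Ransford-type argument: use the $S$-functional calculus to build, for each $\varepsilon>0$, a slice-regular $g_\varepsilon$ vanishing near $1$ with $\|f-g_\varepsilon\|$ small in a suitable Wiener-type norm on the quaternionic sphere, and show $\|T^ng_\varepsilon(T)\|$ decays because $\sigma_S(T)$ meets $\supp g_\varepsilon$ only inside $\mathbb B$; this is the structure of Theorem \ref{Katznelson-Tzafriri-theorem} and would be the more robust path if the Ritt-type estimate resists.
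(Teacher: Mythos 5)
Your primary route for the implication $\Gamma_S(T)\subseteq\{1\}\Rightarrow\|T^n-T^{n+1}\|\to0$ --- verifying the Ritt-type estimate \eqref{Ritt-condition-2} and then running (a variant of) the proof of Theorem \ref{sufficient-condition} --- cannot work, because \eqref{Ritt-condition-2} is strictly stronger than the hypotheses: any such resolvent bound yields, by exactly the Forelli--Rudin computation in Theorem \ref{sufficient-condition}, a \emph{rate} of decay for $\|T^n(\mathcal I-T)\|$, whereas power-boundedness plus $\Gamma_S(T)\subseteq\{1\}$ gives no rate at all (already in the complex case there are power-bounded operators with peripheral spectrum $\{1\}$ for which $\|T^n-T^{n+1}\|$ decays arbitrarily slowly). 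Concretely, the only information available off the point $1$ is a compactness bound $\|Q_s(T)^{-1}\|\le C(\epsilon)$ on sets avoiding an $\epsilon$-neighbourhood of $1$, with no control of $C(\epsilon)$ as $\epsilon\to0$; and your claimed comparison ``$|s|-1\asymp|s-1|$ near $1$'' is false except for radial (nontangential) approach: on the circle $|s|=r$ one has $|s-1|\sim|\theta|$ while $|s|-1=r-1$ can be far smaller, which is precisely why the exponent in \eqref{Ritt-condition-2} cannot be extracted from the soft hypothesis. Your fallback --- localize with a cutoff vanishing off a neighbourhood of $1$ and use regularity of $S_L^{-1}(\cdot,T)$ elsewhere --- is in substance the paper's actual proof, but as stated it is only a plan: the step that makes the near-$1$ contribution small \emph{uniformly in $n$} is missing. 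The paper supplies it via Lemma \ref{technical-lemma}: the Fourier coefficients of $(e^{I\theta}-1)\chi_\epsilon(\theta)$ have $\ell^1$-norm less than $\epsilon$, the Fourier coefficients of $\psi(\theta)=S_L^{-1}(re^{I\theta},T)$ are $T^{n-1}r^{-n}$ and hence bounded by $p(T)$, and the convolution inequality \eqref{technical-convolution} (applicable because the cutoff factor is $\mathbb C_I$-valued, cf. \eqref{technical-convolution-2}) bounds that piece by $p(T)\epsilon$; the piece supported away from $1$ is handled by one integration by parts and is $O_\epsilon\bigl(1/(n+1)\bigr)$.

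In the converse direction your contradiction via $r_S(T-T^2)$ does not close, as you half-admit: from $\|T^n(\mathcal I-T)\|\to0$ you can only bound $\|(T-T^2)^n\|=\|T^n(\mathcal I-T)^n\|\le\|T^n(\mathcal I-T)\|\,\|(\mathcal I-T)^{n-1}\|$, and $\|(\mathcal I-T)^{n-1}\|$ may grow like $(1+p(T))^{n-1}$, so you obtain no upper bound on $r_S(T-T^2)$ below $1+p(T)\ge 2\ge|\lambda_0(1-\lambda_0)|$ --- no contradiction. The observation that $\|T^n(\mathcal I-T)^k\|\to0$ for each \emph{fixed} $k$ does not extend to $k=n$, and no splitting $n=k+(n-k)$ repairs this, since $T^n(\mathcal I-T)^n$ does not factor into two pieces that are both controlled. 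The fix is simple and is what the paper does: for each $n$ apply the $S$-spectral mapping theorem (Theorem \ref{spectral-mapping}) directly to the slice-preserving polynomial $q\mapsto q^n-q^{n+1}$, so that for every $s\in\Gamma_S(T)$ one has $s^n-s^{n+1}\in\sigma_S(T^n-T^{n+1})$ and hence $\|T^n-T^{n+1}\|\ge r_S(T^n-T^{n+1})\ge|s|^n\,|1-s|=|1-s|$; letting $n\to\infty$ forces $s=1$. So both directions of your proposal contain genuine gaps, although your forward-direction fallback correctly identifies the strategy that the paper actually implements.
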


As a direct  consequence, Theorem \ref{Katznelson-Tzafriri-quaternion}  gives the quaternionic version of  the classical result of Gelfand.
See  \cite{Gelfand} or \cite{Allan-Ransford,Zemanek} for the classical case.

\begin{corollary}
Let $T$ be a right $\mathbb{H}$-linear operator  on a  two-sided Banach space over $\mathbb{H}$. If  $T$ is doubly  power-bounded, i.e. $\sup_{n\in \mathbb{Z}}\|T^{n}\|<\infty$, and  $ \sigma_{S} (T) = \{1\}$, then $T=\mathcal{I}$.
\end{corollary}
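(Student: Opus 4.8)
The plan is to derive the corollary directly from Theorem \ref{Katznelson-Tzafriri-quaternion} applied twice: once to $T$ and once to $T^{-1}$. Since $T$ is doubly power-bounded, in particular $\sup_{n\in\mathbb{N}}\|T^n\|<\infty$, so $T$ is a power-bounded quaternionic operator; likewise $\sup_{n\in\mathbb{N}}\|T^{-n}\|<\infty$ shows $T^{-1}$ is power-bounded. The hypothesis $\sigma_S(T)=\{1\}$ gives $\Gamma_S(T)=\sigma_S(T)\cap\partial\mathbb{B}=\{1\}$, and since $\sigma_S(T^{-1})=\{1\}$ as well (this is the one genuinely non-trivial point below), we also have $\Gamma_S(T^{-1})\subseteq\{1\}$. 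Applying Theorem \ref{Katznelson-Tzafriri-quaternion} to $T$ yields $\|T^n-T^{n+1}\|\to 0$, and applying it to $T^{-1}$ yields $\|T^{-n}-T^{-n-1}\|\to 0$.

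Next I would convert these two limits into a statement forcing $T=\mathcal{I}$. From $\|T^{-n}-T^{-n-1}\|\to 0$, left-multiplying by $T^{n+1}$ (whose norms are uniformly bounded by the double power-boundedness, say by $M$) gives $\|T-\mathcal{I}\| = \|T^{n+1}(T^{-n}-T^{-n-1})\|\le M\|T^{-n}-T^{-n-1}\|\to 0$, hence $T=\mathcal{I}$. In fact one could run the symmetric argument with $\|T^n-T^{n+1}\|\to 0$ multiplied on the left by $T^{-n}$; either way the uniform bound on the two-sided powers is exactly what makes the telescoped difference collapse.

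The step I expect to be the main obstacle is verifying that $\sigma_S(T^{-1})=\{1\}$, because in the quaternionic setting the $S$-spectrum is not manipulated via a naive spectral mapping through $z\mapsto z^{-1}$ unless one is careful. The clean route is the $S$-spectral mapping theorem (Theorem \ref{spectral-mapping}): the function $f(q)=q^{-1}$ is slice regular on a neighbourhood of $\sigma_S(T)=\{1\}$ (which avoids $0$), it preserves all slices since it maps $\mathbb{C}_I$ to itself, so $f\in\mathcal{N}_{\sigma_S(T)}$, and $f(T)=T^{-1}$ by the $S$-functional calculus (the $S$-functional calculus agrees with the obvious algebraic inverse on a function with an obvious algebraic interpretation). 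Then $\sigma_S(T^{-1})=f(\sigma_S(T))=\{f(1)\}=\{1\}$. One must also check that $T^{-1}\in\mathcal{B}(X)$, which follows since $1\in\rho_S(T)$ forces $Q_1(T)=(\mathcal I-T)^2$ invertible, but more directly the hypothesis that $T$ is doubly power-bounded presupposes $T$ is invertible with bounded inverse.

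Finally I would assemble the pieces: $T^{-1}$ is a power-bounded quaternionic operator with $\Gamma_S(T^{-1})=\{1\}$, so Theorem \ref{Katznelson-Tzafriri-quaternion} applies and gives $\|T^{-n}-T^{-(n+1)}\|\to 0$; combined with the uniform bound $\sup_{n}\|T^n\|<\infty$ this yields $\|\mathcal I - T\|\to 0$ as a constant sequence, i.e. $\mathcal I = T$. The only routine verifications left are the identification $f(T)=T^{-1}$ for $f(q)=q^{-1}$ under the $S$-functional calculus and the elementary norm estimate in the telescoping step; neither presents any difficulty beyond bookkeeping.
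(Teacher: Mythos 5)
Your argument is correct, but you take a longer route than the statement requires; the paper offers no written proof (it calls the corollary a direct consequence of Theorem \ref{Katznelson-Tzafriri-quaternion}), and the intended argument is exactly the one you mention only in passing: since $\sigma_S(T)=\{1\}$ gives $\Gamma_S(T)\subseteq\{1\}$, the theorem applied to $T$ alone yields $\|T^n(\mathcal{I}-T)\|=\|T^n-T^{n+1}\|\to 0$, and then $\|\mathcal{I}-T\|=\|T^{-n}T^n(\mathcal{I}-T)\|\le \bigl(\sup_{m\in\mathbb{Z}}\|T^m\|\bigr)\|T^n-T^{n+1}\|\to 0$, so $T=\mathcal{I}$. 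Your primary route, through $T^{-1}$, is also workable but costs extra machinery: you must know $\sigma_S(T^{-1})=\{1\}$, and while the spectral mapping theorem (Theorem \ref{spectral-mapping}) with $f(q)=q^{-1}$ does this, the identification $f(T)=T^{-1}$ is not automatic from anything stated in the paper --- it relies on the product rule of the $S$-functional calculus for slice-preserving (intrinsic) functions, which you would have to cite from the literature. Since the one-sided argument avoids all of this, the detour buys nothing. One small slip: you write that $1\in\rho_S(T)$ forces $Q_1(T)=(\mathcal{I}-T)^2$ to be invertible, but $1$ lies in $\sigma_S(T)$, not $\rho_S(T)$; the correct spectral observation is that $0\in\rho_S(T)$ makes $Q_0(T)=T^2$ invertible, hence $T$ is invertible --- though, as you note, invertibility of $T$ in $\mathcal{B}(X)$ is already implicit in the hypothesis $\sup_{n\in\mathbb{Z}}\|T^n\|<\infty$.
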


\begin{corollary}[Gelfand]
Let $T$ be a $\mathbb{C}$-linear  operator  on a complex Banach space. If  $T$ is doubly  power-bounded, i.e. $\sup_{n\in \mathbb{Z}}\|T^{n}\|<\infty$, and  $ \sigma (T) = \{1\}$, then $T=\mathcal{I}$.
\end{corollary}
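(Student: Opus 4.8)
The plan is to deduce this classical theorem of Gelfand from the complex Katznelson--Tzafriri theorem (Theorem~\ref{Katznelson-Tzafriri-popular}), by running the latter simultaneously for $T$ and for its inverse; the two-sided (``doubly'') power-boundedness is precisely what makes the inverse available and norm-controlled. First I would record the elementary consequences of the hypotheses: putting $M:=\sup_{n\in\mathbb{Z}}\|T^{n}\|<\infty$, the fact that $\sigma(T)=\{1\}$ avoids $0$ shows $T$ is invertible, $\sup_{n\in\mathbb{N}}\|T^{-n}\|\leq M$ shows $T^{-1}$ is power-bounded, and the spectral mapping theorem applied to $z\mapsto z^{-1}$ gives $\sigma(T^{-1})=\{1\}$. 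In particular the peripheral spectra satisfy $\Gamma(T)=\sigma(T)\cap\mathbb{T}=\{1\}$ and $\Gamma(T^{-1})=\{1\}$, so both are contained in $\{1\}$.

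Next I would invoke the ``$\Leftarrow$'' implication of Theorem~\ref{Katznelson-Tzafriri-popular} for the power-bounded operator $T^{-1}$, obtaining $\lim_{n\to\infty}\|T^{-n}-T^{-(n+1)}\|=0$. The conclusion then follows from the identity $T^{-n}-T^{-(n+1)}=T^{-(n+1)}(T-\mathcal{I})$, i.e.\ $T-\mathcal{I}=T^{n+1}\bigl(T^{-n}-T^{-(n+1)}\bigr)$, which yields
\[
\|T-\mathcal{I}\|\leq \|T^{n+1}\|\,\|T^{-n}-T^{-(n+1)}\|\leq M\,\|T^{-n}-T^{-(n+1)}\|\longrightarrow 0\qquad(n\to\infty),
\]
hence $T=\mathcal{I}$. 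Equivalently one may apply the theorem to $T$, get $\|T^{n}-T^{n+1}\|\to 0$, and multiply on the left by $T^{-(n+1)}$; the order is immaterial.

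I do not expect any genuine obstacle here: the statement is the well-known Gelfand theorem, and once Theorem~\ref{Katznelson-Tzafriri-popular} is in hand the proof is a short deduction. The only points deserving a word of care are that $T^{-1}$ inherits power-boundedness from the two-sided bound and that $\sigma(T^{-1})=\{1\}$, both immediate. Should one prefer to stay inside the quaternionic framework of this paper, an alternative is to regard $T$ as a right $\mathbb{H}$-linear operator on a suitable quaternionic extension of the complex space, check that the $S$-spectrum and power-boundedness behave as expected, and apply the preceding quaternionic corollary; but the direct route via Theorem~\ref{Katznelson-Tzafriri-popular} is shorter and entirely self-contained.
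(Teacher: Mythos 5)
Your proof is correct and follows the route the paper intends: the corollary is stated there without proof as an immediate consequence of the Katznelson--Tzafriri theorem (Theorem \ref{Katznelson-Tzafriri-popular}), which is exactly your deduction --- power-boundedness together with $\Gamma(T)\subseteq\{1\}$ gives $\|T^{n}-T^{n+1}\|\to 0$, and the uniform bound on the negative powers then forces $T=\mathcal{I}$. Your detour through $T^{-1}$ (rather than applying the theorem to $T$ and multiplying by $T^{-n}$) is a harmless variant, as you yourself observe.
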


Following the idea of  Katznelson and Tzafriri in \cite{Katznelson-Tzafriri}, we   use  arguments of harmonic analysis to prove Theorem \ref{Katznelson-Tzafriri-quaternion}. Before do this, we need to fix some notation which shall be used in the proof.

Given two sequences $\{a_{n}\}, \{b_{n}\}\subset \mathbb{H}$, the convolution sequence of $\{a_{n}\} $ and $ \{b_{n}\}$ is  defined by
$\{\sum_{m=-\infty}^{+\infty} a_{n-m}b_{m}\}$, denoted  by $\{a_{n}\}\star\{ b_{n}\}$.
Being different from the classical  complex case, the convolution  of two quaternionic sequences defined above is noncommutative.

For $\{a_{n}\} \subset \mathbb{H}$, define
\begin{eqnarray*}
\|\{a_{n}\}\|_{p}=
\left\{
\begin{array}{lll}
   \sum_{n\in \mathbb{Z}} |a_{n}|^{p})^{\frac{1}{p}},     & 0<p<+\infty,
\\
\sup_{n\in \mathbb{Z}} |a_{n}|,   &p=+\infty.
\end{array}
\right.
\end{eqnarray*}
 As in the classical  case, it holds that
\begin{eqnarray}\label{technical-convolution}
\|\{a_{n}\}\star\{ b_{n}\}\|_{\infty} \leq \|\{a_{n}\} \|_{\infty} \|\{b_{n}\} \|_{1}.\end{eqnarray}

Now we recall a technical  lemma \cite[Lemma 4.2.3]{Nevanlinna}.
 \begin{lemma}\label{technical-lemma}
For any $\epsilon>0$, there exists a nonnegative $\chi_{\epsilon} \in C^{2}([-\pi,\pi]) $  such that
\begin{eqnarray*}
\chi_{\epsilon}(\theta)=
\left\{
\begin{array}{lll}
   1,     &\mathrm {if} \  |\theta|\leq  \epsilon/2,
\\
0,   &\mathrm {if} \ |\theta| \geq  \epsilon,
\end{array}
\right.
\end{eqnarray*}
and
$$\sum_{n=-\infty}^{+\infty} |\widehat{\chi_{\epsilon}}(n+1)- \widehat{\chi_{\epsilon}}(n)| <\epsilon, $$
where $\widehat{\chi_{\epsilon}}(n)$ are  Fourier coefficients given by
$$\widehat{\chi_{\epsilon}}(n)=\frac{1}{2\pi}\int_{-\pi}^{\pi}\chi_{\epsilon}(\theta)e^{-in\theta} d\theta.$$
\end{lemma}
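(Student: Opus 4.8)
The plan is to recast the target sum as a Wiener--algebra norm and then to exploit two features of the construction: the zero of $e^{-i\theta}-1$ at the origin, and the symmetry of the profile.

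First I would record the elementary but decisive Fourier identity
\[
\widehat{\chi_\epsilon}(n+1)-\widehat{\chi_\epsilon}(n)=\frac{1}{2\pi}\int_{-\pi}^{\pi}\chi_\epsilon(\theta)\,e^{-in\theta}\bigl(e^{-i\theta}-1\bigr)\,d\theta=\widehat{\psi_\epsilon}(n),
\]
where $\psi_\epsilon(\theta):=\bigl(e^{-i\theta}-1\bigr)\chi_\epsilon(\theta)$. Thus the quantity to be estimated is exactly the Wiener norm $\sum_{n\in\mathbb{Z}}|\widehat{\psi_\epsilon}(n)|$ of $\psi_\epsilon$. The requirements that $\chi_\epsilon\in C^2$, be nonnegative, and satisfy the plateau/support conditions are met trivially by setting $\chi_\epsilon(\theta)=\phi(\theta/\epsilon)$ for a fixed \emph{even} profile $\phi\in C^2(\mathbb{R})$ with $0\le\phi\le1$, $\phi\equiv1$ on $[-\tfrac12,\tfrac12]$ and $\supp\phi\subseteq[-1,1]$ (legitimate once $\epsilon<\pi$). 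So the entire content of the lemma is the single inequality $\sum_n|\widehat{\psi_\epsilon}(n)|<\epsilon$.

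The reason this sum is of the right size is a competition between two scales. Because $e^{-i\theta}-1$ \emph{vanishes} at $\theta=0$, one has $|\psi_\epsilon(\theta)|\le|\theta|\le\epsilon$ on the support, hence $\|\psi_\epsilon\|_{L^1}\le2\epsilon^2$ --- one power of $\epsilon$ better than $\chi_\epsilon$ itself --- while $\psi_\epsilon$ still oscillates only up to frequency of order $1/\epsilon$. I would therefore split the sum at a threshold $N\sim1/\epsilon$. For $|n|\le N$ the crude bound $|\widehat{\psi_\epsilon}(n)|\le\frac{1}{2\pi}\|\psi_\epsilon\|_{L^1}$ gives a contribution $\le\frac{2N+1}{2\pi}\,2\epsilon^2=O(\epsilon)$; for $|n|>N$, integrating by parts twice (boundary terms vanish since $\chi_\epsilon,\chi_\epsilon'$ are supported inside $(-\pi,\pi)$) yields $|\widehat{\psi_\epsilon}(n)|\le\frac{\|\psi_\epsilon''\|_{L^1}}{2\pi n^2}$ with $\|\psi_\epsilon''\|_{L^1}=O(1)$, so that tail is $\le\frac{\|\psi_\epsilon''\|_{L^1}}{\pi N}=O(\epsilon)$. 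This already proves $\sum_n|\widehat{\psi_\epsilon}(n)|=O(\epsilon)$.

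The main obstacle is sharpening this $O(\epsilon)$ to a genuine $<\epsilon$, i.e. forcing the implicit constant below $1$, for which the triangle-inequality bounds above are too lossy. Here I would exploit symmetry: with $\phi$ even, $\widehat{\chi_\epsilon}$ is real and even in $n$, so the two-sided sum collapses to $2\sum_{n\ge0}|\widehat{\chi_\epsilon}(n+1)-\widehat{\chi_\epsilon}(n)|=2\,\mathrm{TV}_{[0,\infty)}(\widehat{\chi_\epsilon})$ (this uses only evenness). If the profile is moreover designed so that $\widehat{\chi_\epsilon}(n)$ decreases monotonically in $n\ge0$, the one-sided variation telescopes to $\widehat{\chi_\epsilon}(0)=\frac{1}{2\pi}\int_{-\epsilon}^{\epsilon}\chi_\epsilon\,d\theta\le\frac{\epsilon}{\pi}$, whence the full sum is $\le\frac{2\epsilon}{\pi}<\epsilon$ with room to spare. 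Producing such a profile is the delicate point, since the flat top $\chi_\epsilon\equiv1$ near $0$ works against the convexity that would make the cosine coefficients positive and decreasing. I would address this either by taking $\phi$ to be a sufficiently mollified trapezoid and verifying monotonicity of its coefficients directly, or --- failing exact monotonicity --- by using the second-order integration-by-parts estimate of the previous paragraph to show that the non-monotone oscillations contribute only $o(\epsilon)$ to $\mathrm{TV}_{[0,\infty)}(\widehat{\chi_\epsilon})$, so that $2\widehat{\chi_\epsilon}(0)+o(\epsilon)\le\frac{2\epsilon}{\pi}+o(\epsilon)<\epsilon$ survives for all sufficiently small $\epsilon$.
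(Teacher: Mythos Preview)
The paper does not supply a proof of this lemma: it is quoted from Nevanlinna's monograph \cite[Lemma 4.2.3]{Nevanlinna} with the words ``Now we recall a technical lemma,'' and no argument is given. There is therefore no proof in the paper to compare your proposal against.

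On the substance of your sketch: the reduction to the Wiener norm of $\psi_\epsilon=(e^{-i\theta}-1)\chi_\epsilon$ is clean, and your low/high frequency split at $N\sim 1/\epsilon$, using $\|\psi_\epsilon\|_{L^1}=O(\epsilon^2)$ and $\|\psi_\epsilon''\|_{L^1}=O(1)$, correctly yields $\sum_n|\widehat{\psi_\epsilon}(n)|=O(\epsilon)$. This already suffices for the only use the paper makes of the lemma, since in the proof of Theorem~\ref{Katznelson-Tzafriri-quaternion} one merely needs $\|I_1\|\le p(T)\cdot o(1)$ as $\epsilon\to 0$. Where your proposal remains incomplete is the upgrade from $O(\epsilon)$ to the literal bound $<\epsilon$ with constant $1$: the monotonicity of $n\mapsto\widehat{\chi_\epsilon}(n)$ on $n\ge 0$ is not automatic for a $C^2$ plateau function (the flat top obstructs the convexity that would force positive, decreasing cosine coefficients), and your fallback estimate $2\widehat{\chi_\epsilon}(0)+o(\epsilon)\le\tfrac{2\epsilon}{\pi}+o(\epsilon)<\epsilon$ only covers sufficiently small $\epsilon$. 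If you want the statement exactly as written, either build $\chi_\epsilon$ as an explicit iterated convolution of indicator functions (so that its Fourier coefficients are products of sinc factors and the constants can be tracked by hand), or---more economically---prove the $O(\epsilon)$ version and absorb the universal constant into the final $\epsilon$-argument of Theorem~\ref{Katznelson-Tzafriri-quaternion}.
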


It is well-known that, for the $C^{1}$ function $f:[-\pi,\pi] \rightarrow \mathbb{C}$,
$$f(\theta)=\sum_{n=-\infty}^{+\infty}\widehat{f}(n) e^{in\theta}. $$
The quaternion valued  function  $f$ can be split into
$$f=F+jG, \quad F, G \in \mathbb{C}.$$
Hence,
$$f(\theta)=\sum_{n=-\infty}^{+\infty}(\widehat{F}(n) +j \widehat{G}(n) )e^{in\theta}. $$
In  view of this, for the $C^{1}$ function $f:[-\pi,\pi] \rightarrow \mathbb{H}$ and $I\in \mathbb{S}$,  its  left and right Fourier coefficients  are defined by, respectively,
$$\mathfrak{F}_{L,I}\{f\}(n)=\frac{1}{2\pi}\int_{-\pi}^{\pi}e^{-In\theta} f(\theta)d\theta, \quad n\in \mathbb{Z},$$
$$\mathfrak{F}_{R,I}\{f\}(n)=\frac{1}{2\pi}\int_{-\pi}^{\pi}f(\theta)e^{-In\theta} d\theta, \quad n\in \mathbb{Z}.$$
Consequently,
$$f(\theta)=\sum_{n=-\infty}^{+\infty}e^{In\theta}\mathfrak{F}_{L,I}\{f\}(n)=\sum_{n=-\infty}^{+\infty}\mathfrak{F}_{R,I}\{f\}(n) e^{In\theta}. $$

For  $n\in \mathbb{Z}$ and $C^{1}$ functions $f,g:[-\pi,\pi] \rightarrow \mathbb{H}$, we have
\begin{eqnarray*}
 \mathfrak{F}_{R,I}\{fg\}(n)
&= &  \frac{1}{2\pi}\int_{-\pi}^{\pi}f(\theta)g(\theta)e^{-In\theta} d\theta, \quad
\\
&=& \sum_{m=-\infty}^{+\infty}\mathfrak{F}_{R,I}\{f\}(m) \frac{1}{2\pi}\int_{-\pi}^{\pi} e^{Im\theta} g(\theta)e^{-In\theta} d\theta.
\end{eqnarray*}
If further  $g$ takes values in $\mathbb{C}_{I}$, then
$$\mathfrak{F}_{R,I}\{fg\}(n)=\sum_{m=-\infty}^{+\infty}\mathfrak{F}_{R,I}\{f\}(m) \mathfrak{F}_{R,I}\{f\}(n-m),$$
i.e.,
\begin{eqnarray}\label{technical-convolution-2}
\{\mathfrak{F}_{R,I}\{fg\}(n)\}= \{\mathfrak{F}_{R,I}\{f \}(n)\} \star \{\mathfrak{F}_{R,I}\{g\}(n)\}.\end{eqnarray}

Now we are in a position to prove the quaternionic version of  Katznelson-Tzafriri theorem.
\begin{proof}[Proof of Theorem \ref{Katznelson-Tzafriri-quaternion}]
$\Rightarrow$)
Let $s\in  \Gamma_{S} (T)$. Note  that the function $f(q)=q^{n}-q^{n+1}$ is a slice regular  on $\mathbb{H}$ and preserves each slice. By Theorem \ref{spectral-mapping}, it follows that $f(s)\in     \sigma_{S}(f(T))$. Hence, from  Theorem \ref{spectral-radius} we have
$$\|T^{n}-T^{n+1} \|=\|f(T) \|\geq r_{S}(f(T))\geq |f(s)|=|s-1|.$$
Consequently,  the condition $\lim_{n\rightarrow\infty} \|T^{n}-T^{n+1} \|=0$ forces that $s=1$.

$\Leftarrow$)
From Remark  \ref{series-spetrum}, it holds that  $r_{S}(T)\leq1$ for the power-bounded quaternionic  operator $T$. Hence,   by Lemma \ref{operator-formula}, we get, for any $I\in \mathbb{S}$,
 $$T^{n}=\frac{1}{2\pi}\int_{\partial  U_{I} } S^{-1}_{L}(s,T)ds_{I}s^{n},   $$
 where $U=r\mathbb{B}$ with $r>1$.

Then
$$ T^{n}=\frac{1}{2\pi}\int_{-\pi}^{\pi} S^{-1}_{L}(re^{I\theta},T) r^{n+1} e^{I(n+1)\theta}d\theta, $$
which implies
$$r^{-(n+1)}T^{n}(r^{-1}T-1)=\frac{1}{2\pi}\int_{-\pi}^{\pi} S^{-1}_{L}(re^{I\theta},T) (e^{I\theta}-1)e^{I(n+1)\theta}d\theta. $$
Denote
$$I_{1}=\frac{1}{2\pi}\int_{-\pi}^{\pi} \psi(\theta) \phi(\theta)e^{I(n+1)\theta}d\theta=\mathfrak{F}_{R,I}\{\psi  \phi\}(-n-1),$$
and
$$I_{2}=\frac{1}{2\pi}\int_{-\pi}^{\pi} \varphi(\theta)e^{I(n+1)\theta}d\theta
=\frac{1}{2\pi(n+1)}\int_{-\pi}^{\pi} \varphi'(\theta) Ie^{I(n+1)\theta}d\theta,$$
where
$$\psi(\theta)=S^{-1}_{L}(re^{I\theta},T),$$
$$\phi(\theta)=(e^{I\theta}-1)\chi_{\epsilon}(\theta), $$
$$\varphi(\theta)=\psi(\theta)(e^{I\theta}-1)(1-\chi_{\epsilon}(\theta)).$$
Here $\chi_{\epsilon}$ is given as in  Lemma \ref{technical-lemma}.

Note that
$$\mathfrak{F}_{R,I}\{\phi \}(n)=\mathfrak{F}_{R,I}\{\chi_{\epsilon} \}(n-1)-\mathfrak{F}_{R,I}\{\chi_{\epsilon} \}(n),$$
which implies  under the condition of Lemma \ref{technical-lemma} that
$$\|\{\mathfrak{F}_{R,I}\{\phi \}(n)\}\|_{1}<\epsilon.$$

Noticing also that $$\|\mathfrak{F}_{R,I}\{\psi\}(n)\| \leq\|T^{n-1}r^{-n}\|<p(T),$$
and $\phi(\theta)\in \mathbb{C}_{I}$, we have, by (\ref{technical-convolution}) and (\ref{technical-convolution-2}),
$$\|I_{1}\|\leq\|\{\mathfrak{F}_{R,I}\{\psi  \}(n) \star \mathfrak{F}_{R,I}\{\phi  \}(n)\}\|_{\infty}
\leq\|\{\mathfrak{F}_{R,I}\{\psi  \}(n)\} \|_{\infty}\|\{\mathfrak{F}_{R,I}\{\phi \}(n)\}\|_{1}<p(T)\epsilon.$$

Denote
$$K_{\epsilon}=\{re^{I\theta}: \frac{\epsilon}{2}\leq|\theta|\leq\pi, 1\leq r\leq2, I\in \mathbb{S}\}.$$
Now the operator-valued function $ S^{-1}_{L}(s,T)$ is right slice regular  in $K_{\epsilon}$, then $\varphi'(\theta)$ is  $C^{1}$ and hence   bounded in the compact set $K_{\epsilon}$.
Furthermore,   $\varphi(\theta)\equiv0$ for $|\theta|\leq  \epsilon/2$. Consequently, there exits a constant $C(\epsilon)$ such that, for all $r\in (1,2)$,
 $$\|\varphi'(\theta)\|\leq C(\epsilon),$$
which implies that
$$\|I_{2}\| \leq \frac{C(\epsilon)}{n+1}. $$
In summary, we get
$$\|r^{-(n+1)}T^{n}(r^{-1}T-1)\| \leq \|I_{1}\| +\|I_{2}\|<p(T)\epsilon+\frac{C(\epsilon)}{n+1}.$$
By letting $r\rightarrow1$ and $n\rightarrow \infty$, we obtain
$$\lim_{n\rightarrow\infty} \|T^{n}-T^{n+1} \|\leq p(T)\epsilon,$$
the desired result follows from the arbitrariness of $\epsilon$.\end{proof}
\bigskip
\textbf{Statements and Declarations}\\
\textbf{Author contributions}
All authors have contributed equally to all aspects of this manuscript and have reviewed its final draft.
\\
\textbf{Conflict of interest}
On behalf of all authors, the corresponding author states that there is no conflict of interest.
\\
\textbf{Data availability} No datasets are analysed or generated in this article.




\vskip 10mm

\begin{thebibliography}{99}
\bibitem{Allan} G. R.  Allan, A. G. O'Farrell, T. J. Ransford,
\textit{A Tauberian theorem arising in operator theory}, Bull. London Math. Soc. 19 (1987), no. 6, 537-545.
\bibitem{Allan-Ransford} G. R. Allan,  T. J. Ransford,  \textit{Power-dominated elements in a Banach algebra},
Studia Math. 94 (1989), no. 1, 63-79.

\bibitem{Alpay-Colombo-Sabadini}D. Alpay, F. Colombo, I. Sabadini,
 \textit{Quaternionic de Branges spaces and characteristic operator function}, SpringerBriefs in Mathematics. Springer, Cham, 2020.

\bibitem{Batty}C. Batty, \textit{Asymptotic behaviour of semigroups of operators}, Functional analysis and operator theory, 35-52, Banach Center Publ., 30, Polish Acad. Sci. Inst. Math., Warsaw, 1994.

\bibitem{Batty-Seifert}C. Batty,  D.   Seifert,
\textit{Some developments around the Katznelson-Tzafriri theorem},
Acta Sci. Math.  (Szeged)   88 (2022), no. 1-2, 53-84.

\bibitem{Chill-Tomilov}R. Chill,  Y. Tomilov,
\textit{Stability of operator semigroups: ideas and results}, Perspectives in operator theory, 71-109,
Banach Center Publ., 75, Polish Acad. Sci. Inst. Math., Warsaw, 2007.

\bibitem{Colombo-Gantner-19} F. Colombo,  J.  Gantner,   \textit{Quaternionic closed operators, fractional powers and fractional diffusion processes. Operator Theory: Advances and Applications,} Vol.  274, Birkh\"auser/Springer, Cham, 2019.



\bibitem{Colombo-Gantner-Kimsey-18} F.  Colombo,  J.  Gantner,  D. P.  Kimsey,   \textit{Spectral theory on the S-spectrum for quaternionic operators. Operator Theory: Advances and Applications}, Vol. 270, Birkh\"auser/Springer, Cham, 2018.
\bibitem{Colombo-Gantner-Kimsey-Sabadini-22} F.  Colombo,  J.  Gantner,  D. P.  Kimsey, I. Sabadini,   \textit{Universality property of the $S$-functional calculus, noncommuting matrix variables and Clifford operators}, Adv. Math. 410 (2022), part A, Paper No. 108719, 39 pp.

\bibitem{Colombo-Sabadini-11}F. Colombo, I. Sabadini,
 \textit{The quaternionic evolution operator}, Adv. Math. 227 (2011), no. 5, 1772-1805.

\bibitem{Colombo-Sabadini-Struppa-11} F. Colombo, I. Sabadini, D. C. Struppa, \textit{Noncommutative functional calculus. Theory and applications of slice hyperholomorphic functions}, Progress in Mathematics, Vol. 289, Birkh\"auser/Springer, Basel, 2011.

\bibitem{Davies}  E. B. Davies,\textit{One-parameter semigroups}, London Mathematical Society Monographs, 15. Academic Press,  London-New York, 1980.

\bibitem{Engel-Nagel}  K. J. Engel, R. Nagel, \textit{One-parameter semigroups for linear evolution equations},
 Graduate Texts in Mathematics, 194. Springer-Verlag, New York, 2000.

\bibitem{Esterle} J.  Esterle,  \textit{Quasimultipliers, representations of $H^{\infty}$, and the closed ideal problem for commutative Banach algebras}, 66-162, Lecture Notes in Math., 975, Springer, Berlin, 1983.


\bibitem{Gelfand}I. Gelfand,
\textit{Zur Theorie der Charaktere der Abelschen topologischen Gruppen},
Rec. Math.   9 (51), (1941). 49-50.

\bibitem{Gentili-Stoppato-Struppa-13} G. Gentili, C. Stoppato, D. C. Struppa, \textit{Regular functions of a quaternionic variable}, Springer Monographs in Mathematics, Springer, Berlin-Heidelberg, 2013. Second Edition, 2022.


\bibitem{Gentili-Struppa-07} G.  Gentili, D. C. Struppa, \textit{ A new theory of regular functions of a quaternionic variable}, Adv. Math. 216 (2007) no. 1, 279-301.

    \bibitem{Ghiloni}
R. Ghiloni,  V. Recupero,   \textit{Semigroups over real alternative $\ast$-algebras: generation theorems and spherical sectorial operators},
Trans. Amer. Math. Soc. 368 (2016), no. 4, 2645-2678.


\bibitem{Ghiloni-Moretti-Perotti-13} R. Ghiloni,  V. Moretti,   A. Perotti,  \textit{
Continuous slice functional calculus in quaternionic Hilbert spaces}, Rev. Math. Phys. 25 (2013), no. 4, 1350006, 83 pp.

 \bibitem{Gibson}A. G. Gibson, \textit{A discrete Hille-Yosida-Phillips theorem}, J. Math. Anal. Appl. 39 (1972), 761-770.

\bibitem{Katznelson-Tzafriri}Y.  Katznelson, L. Tzafriri, \textit{On power bounded operators}, J. Funct. Anal. 68 (1986), no. 3, 313-328.


\bibitem{LeVeque-Trefethen} R. J. LeVeque, L. N. Trefethen,
\textit{On the resolvent condition in the Kreiss matrix theorem}, BIT 24 (1984), no. 4, 584-591.


\bibitem{Morton} K. W. Morton,\textit{On a matrix theorem due to H. O. Kreiss}, Comm. Pure Appl. Math. 17 (1964), 375-379.
\bibitem{Nevanlinna}O. Nevanlinna, \textit{Convergence of iterations for linear equations},  Lectures in Mathematics ETH Z\"{u}rich. Birkh\"auser Verlag, Basel, 1993.


\bibitem{Ornstein-Sucheston} D. Ornstein, L. Sucheston,   \textit{An operator theorem on $L_{1}$ convergence to zero with applications to Markov kernels},  Ann. Math. Statist. 41 (1970), 1631-1639.
\bibitem{Pazy} A. Pazy, \textit{Semigroups of linear operators and applications to partial differential equations}, Applied Mathematical Sciences, 44. Springer-Verlag, New York, 1983.




\bibitem{Vu}Q. P. V\~{u},  \textit{A short proof of the Y. Katznelson's and L. Tzafriri's theorem}, Proc. Amer. Math. Soc. 115 (1992), no. 4, 1023-1024.

\bibitem{Wang-Qin-Agarwal}C. Wang, G. Qin,  R. P. Agarwal,   \textit{Quaternionic exponentially dichotomous operators through $S$-spectral splitting and applications to Cauchy problem}, Adv. Math. 410 (2022), part B, Paper No. 108747, 97 pp.



\bibitem{Zemanek}   J. Zem\'{a}nek, \textit{On the Gelfand-Hille theorems}, Functional analysis and operator theory (Warsaw, 1992), 369-385,
Banach Center Publ., 30, Polish Acad. Sci. Inst. Math., Warsaw, 1994.

\bibitem{Zhu} K. Zhu, \textit{Spaces of holomorphic functions in the unit ball},  Graduate Texts in Mathematics, 226. Springer-Verlag, New York, 2005.
\end{thebibliography}
\end{document}